\documentclass{amsart}
\usepackage{amssymb}
\usepackage{amsmath}

\usepackage{stmaryrd}

\DeclareMathOperator{\Lip}{Lip}

\newcommand{\pilip}{\ensuremath{\pi^{\mathrm{Lip}}}}

\newcommand{\N}{\ensuremath{\mathbb{N}}}
\newcommand{\R}{\ensuremath{\mathbb{R}}}

\newcommand{\Z}{\ensuremath{\mathbb{Z}}}
\renewcommand{\H}{\ensuremath{\mathbb{H}}}

\newcommand{\haus}{\ensuremath{\mathcal{H}}}

\newcommand{\length}{\ensuremath{\operatorname{length}}}
\newcommand{\md}{\ensuremath{\operatorname{md}}}

\newtheorem{thm}{Theorem}
\newtheorem{question}{Question}
\newtheorem{theorem}[thm]{Theorem}

\newtheorem{lemma}[thm]{Lemma}

\newtheorem{cor}[thm]{Corollary}

\title{Lipschitz homotopy groups of the Heisenberg groups}
\author{Stefan Wenger}
\address{D\'{e}partement de math\'{e}matiques\\
 Universit\'{e} de Fribourg\\
 Chemin du Mus\'{e}e 23\\
 CH-1700 Fribourg\\
 Switzerland}
\email{stefan.wenger@unifr.ch}

\author{Robert Young}
\address{Department of Mathematics\\
  University of Toronto\\
  40 St.\ George St., Room 6290\\
  Toronto, Ontario  M5S 2E4\\
  Canada}
\email{ryoung@math.toronto.edu}
\thanks{The second author was supported by a Discovery Grant from the
  Natural Sciences and Engineering Research Council of Canada and a
  grant from the Connaught Fund, University of Toronto.}

\begin{document}
\bibliographystyle{amsalpha}

\begin{abstract}
  Lipschitz and horizontal maps from an $n$-dimensional space into the $(2n+1)$-dimensional
  Heisenberg group $\H^n$ are abundant, while maps from
  higher-dimensional spaces are much more restricted.
  DeJarnette-Haj{\l}asz-Lukyanenko-Tyson constructed horizontal maps from $S^k$
  to $\H^n$ which factor through $n$-spheres and showed that these
  maps have no smooth horizontal fillings.  In this paper, however, we
  build on an example of Kaufman to show that these maps sometimes
  have Lipschitz fillings.  This shows that the Lipschitz and the
  smooth horizontal homotopy groups of a space may differ.
  Conversely, we show that any Lipschitz map $S^k\to \H^1$ factors
  through a tree and is thus Lipschitz null-homotopic if $k\ge 2$.
\end{abstract}

\maketitle

DeJarnette, Haj{\l}asz, Lukyanenko, and Tyson recently initiated a study of
 smooth horizontal homotopy groups $\pi_k^H(X)$ and 
Lipschitz homotopy groups $\pilip_k(X)$ when $X$ is a sub-Riemannian manifold \cite{DHLT}.
By definition, $\pi_k^H(X)$ (and $\pilip_k(X)$) consist of
classes of smooth horizontal (respectively Lipschitz) maps $S^k\to X$, where two
maps lie in the same class if there is a homotopy $S^k\times [0,1]\to
X$ between them which is also smooth horizontal (resp.\ Lipschitz).

The groups $\pi_k^H(X)$ and $\pilip_k(X)$ capture more of the geometry of
sub-Riemannian manifolds than the usual homotopy groups $\pi_k(X)$.  For example,
if $X=\H^n$ is the $n$th Heisenberg group with its standard
Carnot-Carat\'eodory metric, it is homeomorphic to $\R^{2n+1}$, so its 
homotopy groups $\pi_k(\H^n)$ are trivial. Lipschitz maps to $\H^n$, however, are
more complicated.  If $f:D^{k}\to \H^n$ is Lipschitz, it must be a.e.\
Pansu
differentiable \cite{Pansu}.  In particular, the rank of $Df$
is a.e.\ at most $n$.  
Ambrosio-Kirchheim \cite{Ambrosio-Kirchheim-rectifiable}
and Magnani \cite{Magnani-unrectifiable} showed that, as a consequence, if $k>n$, then
$\mathcal{H}^k_{cc}(f(D^k))=0$.  Therefore, if $\alpha:S^n\to \H^n$ is
a smooth horizontal (and thus Lipschitz) embedding, it cannot be
extended to a Lipschitz map of a ball \cite{GromovCC, Balogh-Faessler, RigotWenger},
so $\pi_n^H(\H^n)$ and $\pilip_n(\H^n)$ are non-trivial.  In fact, these
groups are uncountably generated \cite{DHLT}.

The behavior of $\pi^H_k(\H^n)$ and $\pilip_k(\H^n)$ when $k>n$ is
just starting to be explored.  DeJarnette, Haj{\l}asz, Lukyanenko, and Tyson \cite{DHLT}
showed that if $\beta\in \pi_k(S^n)$ is nontrivial, then $\alpha\circ \beta:S^k\to \H^n$ is a
nontrivial element of $\pi_k^H(\H^n)$ (their theorem is stated for a
particular smooth embedding $\alpha$, but their methods generalize to arbitrary
smooth embeddings).  Their proof relies on
Sard's theorem, however, so it does not generalize to Lipschitz maps.  They
asked:
\begin{question}[{\cite[4.6]{DHLT}}]
  Is the map $\pi_k^H(\H^n)\to \pilip_k(\H^n)$ an isomorphism?
\end{question}
\begin{question}[{\cite[4.17]{DHLT}}]
  If $\alpha:S^n\to \H^n$ is a bilipschitz embedding, is the induced
  map $\pilip_k(S^n)\to \pilip_k(\H^n)$ an injection?
\end{question}

In this paper, we will show that even if $\beta\in \pi_k(S^n)$ is
nontrivial, $\alpha\circ \beta$ may be Lipschitz-null homotopic,
answering both of these questions in the negative. More precisely, we prove the following theorems.

\begin{thm}\label{thm:heisHigher}
  If $\alpha:S^n\to \H^n$ and $\beta:S^k\to S^n$ are Lipschitz maps
  and $n+2\le k<2n-1$, then $\alpha\circ \beta $ can be extended to a
  Lipschitz map $D^{k+1}\to \H^n$.
\end{thm}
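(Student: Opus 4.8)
The plan is to reduce the theorem, via Pansu differentiability, to a purely symplectic extension problem in $\R^{2n}$, and then to solve that problem by a fractal construction modelled on Kaufman's example. Identify $\H^n=\R^{2n}\times\R$ with the group law built from the standard symplectic form $\omega$ on $\R^{2n}$, let $\Pi\colon\H^n\to\R^{2n}$ be the projection, and let $\lambda$ be the standard primitive of $\omega$ on $\R^{2n}$. The first point is that a Lipschitz map $v\colon D^{k+1}\to\R^{2n}$ with $v^{*}\omega\equiv 0$ lifts to a Lipschitz map $F=(v,t)\colon D^{k+1}\to\H^n$: since $D^{k+1}$ is contractible, $v^{*}\lambda$ is closed, hence exact, and any primitive $t$ of $v^{*}\lambda$ makes the derivative $DF$ take values in the horizontal distribution with $\|DF\|=\|dv\|$, so $F$ is Lipschitz for the Carnot--Carath\'eodory metric. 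Conversely, because $\alpha\circ\beta$ is Lipschitz it is a.e.\ Pansu differentiable with horizontal (isotropic) derivative, so its horizontal part $v_{0}:=\Pi\circ\alpha\circ\beta\colon S^{k}\to\R^{2n}$ is Lipschitz with $v_{0}^{*}\omega\equiv 0$, and the vertical part $t_{0}$ of $\alpha\circ\beta$ is a primitive of $v_{0}^{*}\lambda$. Hence it suffices to extend $v_{0}$ to a Lipschitz map $v\colon D^{k+1}\to\R^{2n}$ with $v^{*}\omega\equiv 0$; a primitive of $v^{*}\lambda$ then automatically restricts to $t_{0}$ on $S^{k}$ up to an additive constant, so $F=(v,t)$ is the desired Lipschitz extension of $\alpha\circ\beta$.

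Now $v_{0}=\bar\alpha\circ\beta$, where $\bar\alpha:=\Pi\circ\alpha\colon S^{n}\to\R^{2n}$ is Lipschitz with isotropic derivative — a Lagrangian immersion of $S^{n}$, up to regularity, when $\alpha$ is an embedding — and the task is to fill $\bar\alpha\circ\beta$ by an \emph{isotropic} Lipschitz disk in $\R^{2n}$. This is where the difficulty, and the hypotheses, live. A Lipschitz isotropic map of $D^{k+1}$ has derivative of rank at most $n<k+1$, so its image is $n$-rectifiable; in particular it cannot factor through an $n$-dimensional polyhedron containing $S^{n}$ along which $\beta$ would become null-homotopic, since an $(n-1)$-connected $n$-complex is a wedge of $n$-spheres and $\beta$ survives there. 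The extension $v$ must instead route $\beta$ through a \emph{wild} $n$-rectifiable subset of $\R^{2n}$, and constructing an $n$-rectifiable set that simultaneously carries an isotropic Lipschitz structure extending $\bar\alpha$ and absorbs $[\beta]\in\pi_{k}(S^{n})$ is precisely what a suitable higher-dimensional generalisation of Kaufman's construction should provide. Concretely, one would triangulate $D^{k+1}$, push $\beta$ onto the $n$-skeleton of $S^{n}$, and over the simplices replace the naive Euclidean cone by rescaled copies of a Kaufman-type isotropic building block, arranged so that the geometric decay of scales across successive skeleta keeps the assembled map Lipschitz while the topology of the blocks kills $\beta$.

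The two dimension bounds enter in complementary ways. The upper bound $k\le 2n-2$, i.e.\ $\dim D^{k+1}=k+1\le 2n-1$, leaves codimension $2n-(k+1)\ge 1$ in the target: at each scale there is a horizontal direction transverse to the current image into which the bracket-generating loops of a Kaufman block can be spread, and it is this slack that vanishes at $k=2n-1$. The lower bound $k\ge n+2$, i.e.\ $\dim D^{k+1}\ge n+3$, leaves a gap $(k+1)-n\ge 3$ between the dimension of the domain and that of the $n$-rectifiable image — the room the fractal needs in order to hide infinitely many nested building blocks without the Lipschitz constant blowing up. At $k=n$ the filling need not exist, since horizontal embeddings are not Lipschitz-extendable as recalled above, and the case $k=n+1$ lies just outside the reach of the construction; this is also why $\beta$ being in the stable range, i.e.\ $k\le 2n-2$, so that $\beta$ is a suspension by Freudenthal, is the homotopy-theoretic hypothesis we have available.

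The step I expect to be the main obstacle is building the Kaufman-type isotropic block in $\R^{2n}$ and, above all, globalising it coherently: the rescaled blocks on adjacent simplices must be patched into a single Lipschitz isotropic map with a uniform constant, and one must check quantitatively that the codimensions actually available — $2n-(k+1)\ge 1$ in the target and $(k+1)-n\ge 3$ in the domain — really do suffice to carry the nested loops at every scale. By comparison, the homotopy-theoretic bookkeeping, that the combined block topology kills precisely $[\beta]$ using Freudenthal suspension in the stable range, should be routine once the geometric construction is in hand.
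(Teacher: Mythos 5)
There is a genuine gap: the entire construction that the theorem actually requires is deferred to ``a suitable higher-dimensional generalisation of Kaufman's construction,'' and you never supply the mechanism that makes such a generalisation close up. The paper's proof works by a self-similar substitution: it removes $N$ small cubes from $I^{k+1}$ and builds a Lipschitz map $h$ on the complement, sending the outer boundary by $\beta$ and each hole's boundary by a \emph{scaled copy of $\beta$ itself}, so that the process can be iterated indefinitely down a Cantor set. The obstruction to building $h$ on the single $(k+1)$-cell is an explicit class in $\pi_k(J)$ with $J\simeq\vee^N S^n$, namely $\bigl(\sum_i \iota_i\bigr)\circ\beta-\sum_i(\iota_i\circ\beta)$, i.e.\ the failure of the distributive law, which vanishes exactly when the Hopf--Hilton invariants of $\beta$ vanish (Hilton--Milnor); the hypothesis $k<2n-1$ enters only there, since then $\pi_k(S^{q_j(n-1)+1})=0$ for all $j$ (equivalently $\beta$ is a suspension by Freudenthal). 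Your proposal never identifies this distributivity obstruction --- you dismiss the homotopy-theoretic step as ``routine bookkeeping'' --- and you misattribute the upper bound $k\le 2n-2$ to a codimension/transversality condition in $\R^{2n}$ (``a horizontal direction transverse to the current image''). That reading is contradicted by the paper itself: the general Theorem~\ref{thm:heisHilton} has no upper bound on $k$ at all, only the condition \eqref{eq:HH}, and the failure at $k=2n-1$ (Haj{\l}asz--Schikorra--Tyson) is a Hopf-invariant phenomenon, not a lack of transverse directions. Similarly, $k\ge n+2$ is used as a precise cell-counting inequality, $N(\epsilon)\le\lfloor 1/(2c\epsilon)\rfloor^{k+1}$ with $N(\epsilon)\sim\epsilon^{-(n+2)}$, which permits the substitution step, not just vague ``room for the fractal.''

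Your reduction to an isotropic extension problem in $\R^{2n}$ via lifting a primitive of $v^*\lambda$ is a plausible (if technically delicate for merely Lipschitz $v$) reformulation, but it is also not how the paper handles the Heisenberg case, and even granting it, it only restates the problem: everything hinges on producing the isotropic (or horizontal) filling, which is the part you leave open. The paper instead never maps into a fixed subset of $\R^{2n}$ or $\H^n$ at the intermediate stage: it builds abstract $n$-complexes $J(\epsilon)$ (the $n$-skeleton of a cone $(\partial I^{n+1}\times[0,1/\epsilon])\cup(I^{n+1}\times\{0\})$), maps them to $\H^n$ by combining the dilations $s_{\epsilon t}$ with the low-dimensional Lipschitz extension theorem (Theorem~\ref{thm:heisExtend}), and balances the mismatched scaling factors of domain ($(c\epsilon)^{i}$) and target ($\epsilon^{i}$) so that $\Lip(\sigma_i\circ\gamma_i)$ stays uniformly bounded. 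Without the self-similar ``holes carry copies of $\beta$'' scheme, the identification of the obstruction with the Hopf--Hilton invariants, and the scale-balancing of the two towers $X_i$, $Y_i$, your outline does not yet constitute a proof.
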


%This in particular answers Questions 4.6 and 4.17 of \cite{DHLT} in the negative.
Since this extension is Lipschitz, it is almost everywhere Pansu differentiable,
and the Pansu differential has rank $\le n$ wherever it is defined.
Another version of our construction proves:
\begin{thm}\label{thm:euclHigher}
  If $n+1\le k<2n-1$, then
  any Lipschitz map $\beta:S^k\to S^n$ can be extended to a Lipschitz
  map $D^{k+1}\to \R^{n+1}$ whose derivative has rank $\le n$ almost everywhere.
\end{thm}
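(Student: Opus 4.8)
The plan is to produce the extension as a ``folded cone'': a Lipschitz map which agrees with $\beta$ on the boundary, stays close to $S^n$ for most of $D^{k+1}$, and degenerates---in the manner of Kaufman's singular map of a cube onto a square---in the region where it must leave $S^n$, the hypotheses $n+1\le k<2n-1$ being exactly what leaves room for this degeneration. First I would normalize $\beta$. After replacing it by a PL map $\beta'$ at small $C^0$-distance and gluing onto a collar of $\partial D^{k+1}$ the straight-line homotopy $H\colon S^k\times[0,1]\to S^n$ from $\beta$ to $\beta'$---which is Lipschitz and has $\operatorname{rank}DH\le n$ because it takes values in the $n$-sphere---it suffices to extend $\beta'$. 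It is convenient to simplify further using the stable range $k\le 2n-2$: by Freudenthal one may, after another homotopy of this kind, take $\beta'=\gamma*\mathrm{id}_{S^{j-1}}$ for some Lipschitz $\gamma\colon S^{k-j}\to S^{n-j}$ and some $j\ge 1$, and then the join $\Gamma*\mathrm{id}_{S^{j-1}}$ of any Lipschitz extension $\Gamma\colon D^{k-j+1}\to\R^{n-j+1}$ of $\gamma$ with $\operatorname{rank}D\Gamma\le n-j$ is a Lipschitz extension of $\beta'$ into $\R^{n+1}$ of rank $\le(n-j)+(j-1)+1=n$; this reduces matters to the case in which $k$ is as large as the hypothesis allows.

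Now take the PL cone $F_0(x)=|x|\,\beta'(x/|x|)$, which is a Lipschitz extension of $\beta'$. Off a null set---the preimage of the codimension-one skeleton of $\beta'$ together with the center---$F_0$ is $C^1$ with $\operatorname{rank}DF_0=\operatorname{rank}D\beta'+1\le n+1$, and on a small cube $Q$ meeting the set where this rank equals $n+1$, $F_0$ is $C^1$-close to an affine map of rank $n+1$, that is, up to an affine change of coordinates, to the projection $\R^{k+1}\to\R^{n+1}$, whose fibers have the positive dimension $k-n$. The heart of the proof is then a Kaufman-type folding lemma: a Lipschitz map that is $C^1$-close to such a projection can be perturbed---kept fixed near $\partial Q$, moved arbitrarily little in $C^0$, and with Lipschitz constant bounded independently of the size of $Q$---to a Lipschitz map of rank $\le n$ almost everywhere, the perturbation folding only in the $k-n$ fiber directions. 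Granting the lemma one covers the rank-$(n+1)$ set of $F_0$ by such cubes, applies the lemma on each, and iterates on the residual bad set at successively finer scales with summable $C^0$-errors; the resulting maps converge uniformly to a Lipschitz map $F$ which still extends $\beta'$ and has $\operatorname{rank}DF\le n$ a.e.

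The main obstacle is the folding lemma itself, and making the iteration converge inside the Lipschitz category. Kaufman's $C^1$ surjection $[0,1]^3\to[0,1]^2$ of rank $\le 1$ is precisely the one-lower-dimensional model of the lemma, and I expect the general case to follow by carrying out his fractal fold along the $(k-n)$-dimensional fibers, transverse to the $n-1$ ``kept'' target directions; the real work is (i) to do this with a scale-independent Lipschitz bound, so that the infinitely many folds of the iteration accumulate to a genuinely Lipschitz map, and (ii) the dimension bookkeeping showing that $n+1\le k<2n-1$ is the exact range in which the construction runs---the lower bound supplying at least one fiber direction to fold in, and the upper bound being what both makes the suspension reduction above available and keeps the Lipschitz constants of the folds controlled. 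The remaining points---that collars, joins, and PL cones are Lipschitz with controlled constants, and that a uniform limit of maps with a common Lipschitz bound is Lipschitz---are routine.
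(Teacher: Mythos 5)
The heart of your argument is the local ``folding lemma'': that a Lipschitz map on a cube $Q\subset\R^{k+1}$ which is $C^1$-close to an affine map of rank $n+1$ can be perturbed rel $\partial Q$, with arbitrarily small $C^0$-error and Lipschitz constant independent of the size of $Q$, to a map of rank $\le n$ almost everywhere. This lemma is not merely left unproven in your sketch; it cannot be true in the stated generality, because it is completely insensitive to the homotopy class of $\beta$ and would therefore prove too much. Indeed, for \emph{any} Lipschitz $\beta:S^k\to S^n$ with $k\ge n+1$, your cone $F_0$ over a PL approximation is affine on each top-dimensional simplex; on the simplices where its rank equals $n+1$ one could Vitali-cover almost all of the simplex by disjoint small cubes, apply the lemma on each, and glue (the uniform Lipschitz bound and $C^0$-errors tending to zero with the cube size make the glued map Lipschitz), obtaining a rank-$\le n$ Lipschitz extension of every such $\beta$ --- in particular of a Hopf map $S^{4d-1}\to S^{2d}$. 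That contradicts the Haj{\l}asz--Schikorra--Tyson result quoted in the introduction, by which any Lipschitz extension $D^{4d}\to\R^{2d+1}$ of a map with nonzero Hopf invariant must have rank $2d+1$ on a set of positive measure. So the hypothesis $k<2n-1$ cannot enter through ``Lipschitz control of the folds'' in a local perturbation statement, as you propose; it must enter through global homotopy-theoretic data. Your appeal to Freudenthal does not supply this: the desuspension/join step only trades the problem for the identical problem one dimension lower (and desuspending pushes you toward, not away from, the unstable boundary), so the whole weight still rests on the false local lemma.

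For contrast, in the paper the dimension hypothesis is used exactly once, and globally. One maps the cube-with-holes $K=I^{k+1}\smallsetminus\bigcup_i K_i$ to the $n$-skeleton $J$ of a cubical subdivision of $I^{n+1}$, sending $\partial I^{k+1}$ by $\beta$ and each $\partial K_i$ by a scaled copy $\beta_i$ of $\beta$; the only obstruction to extending over the remaining $(k+1)$-cell is the class $\bigl(\sum_i \iota_i\bigr)\circ\beta-\sum_i(\iota_i\circ\beta)$ in $\pi_k(J)$ with $J$ homotopy equivalent to $\vee^N S^n$, and by the Hilton--Milnor theorem this vanishes exactly when the Hopf--Hilton invariants of $\beta$ vanish --- automatic when $k<2n-1$, or when $\beta$ is a suspension. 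The rank condition then costs nothing: the image lies in an $n$-dimensional complex at every stage of the self-similar filling of the holes, the Lipschitz constant is preserved because domain and range are rescaled by the same factor, and one extends over the residual measure-zero Cantor set by continuity. Any repair of your approach would have to replace the folding lemma by a step that sees this homotopy-theoretic obstruction; no purely local rank-reduction statement can.
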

Our constructions build on Kaufman's construction of a Lipschitz surjection from the unit
cube to the unit square whose derivative has rank 1 almost everywhere
\cite{KaufmanSingular}.

In the time since the writing of this paper, several related
results have appeared.  Haj{\l}asz, Schikorra, and Tyson used a
generalization of the Hopf invariant to prove that
$\pilip_{4d-1}(\H^{2d})$ is nontrivial \cite{HaScTy}.  Indeed, they
show that, if
$$\beta:S^{4d-1}\to S^{2d}\subset \R^{2d+1}$$
is a Lipschitz map with nonzero Hopf invariant, then any extension of
$\beta$ to a Lipschitz map $D^{4d}\to \R^{2d+1}$ must have rank $2d+1$
on a set of positive measure.  If $\alpha:S^{2d}\to \H^{2d}$ is a
Lipschitz embedding, then $\alpha\circ \beta$ is a nontrivial element
of $\pilip_{4d-1}(\H^{2d})$.  This implies that
Theorems~\ref{thm:heisHigher} and \ref{thm:euclHigher} need not hold
when $k=2n-1$.  Note that the theorems may still hold when $k\ge
2n-1$.  In Section~\ref{sec:constExt}, we will prove a generalization (see Theorem~\ref{thm:heisHilton} below)
of Theorem~\ref{thm:heisHigher} that holds, for instance, when $k\ge
n+2$ and $\beta$ is a suspension of a map $S^{k-1}\to S^{n-1}$.

Guth \cite{Guth} has also published results which complement the
results in this paper.  He considers the $d$-dilation of $C^1$ maps.
The $d$-dilation of a map bounds the amount that the map stretches
$d$-dimensional surfaces in its domain.  If $d=1$, this is the
Lipschitz constant of the map, and if the $d$-dilation of a map is
zero, its derivative has rank $< d$ at every point.  Among other
results, Guth shows that if $m>n$ and $\beta:S^{m}\to S^{n}$, is the
suspension of a map $S^{m-1}\to S^{n-1}$, then there is a $C^1$ map
$a'$ homotopic to $a$ such that $a'$ has arbitrarily small
$n$-dilation \cite[Prop.~1.1]{Guth}.

When $a:S^m\to S^n$ is a double suspension, we can construct a similar
map.  In fact, if $a$ is a double suspension, then there is a
Lipschitz map $a'$ homotopic to $a$ such that $a'$ has rank $n-1$.  By
Theorem~\ref{thm:euclHilton} below, if $\beta:S^{m-1}\to S^{n-1}$ is a
suspension, there is a Lipschitz extension $\gamma:D^{m}\to D^{n}$
with rank $n-1$.  By gluing two copies of $\gamma$ together, we obtain
a Lipschitz map $a':S^m\to S^n$ which has rank $n-1$ and is homotopic
to the suspension of $\beta$.  In Section~13.2 of \cite{Guth}, Guth
asks whether any such maps exist; this answers his question
positively.  Guth also proves a number of other results about the
existence and nonexistence of topologically non-trivial maps with low
$d$-dilation, and whether these other results can also be modified to
give maps with low rank is an open question.

In view of the results above it is natural to ask whether $\pilip_k(\H^n)$ is trivial when
$n+2\le k<2n-1$.  This may be hard to answer, since general Lipschitz
$k$-spheres in $\H^n$ may be more complicated.  While the spheres we
consider in Theorems~\ref{thm:heisHigher} and \ref{thm:euclHigher}
have image with Hausdorff dimension $n$, the methods we use to prove
the theorems can be adapted to produce Lipschitz maps of $k$-spheres
to $\H^n$ whose image has Hausdorff dimension arbitrarily close to
$k$.

When $n=1$, however, things are much simpler.  We will show:
\begin{thm}\label{thm:sphere-factor-tree}
  If $k\ge 2$, then any Lipschitz map $f:S^k\to \H^1$ factors through a metric tree.   
  That is, there is a metric tree $Z$ and there are Lipschitz maps $\psi: S^k\to Z$ and $\varphi: Z\to \H^1$ such that $f = \varphi\circ\psi$. 
\end{thm}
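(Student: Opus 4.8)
The plan is to push the problem down to the abelianization of $\H^1$ and then lift back up. Let $\pi\colon\H^1\to\H^1/[\H^1,\H^1]\cong\R^2$ be the quotient homomorphism; since the Carnot--Carath\'eodory metric dominates the Euclidean metric in the horizontal directions, $\pi$ is Lipschitz, so $g:=\pi\circ f\colon S^k\to\R^2$ is Lipschitz. Two features of $g$ make everything work. First, \emph{the image of $g$ is small}: as $f$ is a Lipschitz map of the $k$-dimensional sphere with $k\ge 2$ into $\H^1$, its Pansu differential has rank $\le 1$ almost everywhere, so by the Ambrosio--Kirchheim/Magnani result $\mathcal{H}^2_{cc}(f(S^k))=0$ and hence $\mathcal{H}^2(g(S^k))=0$; in particular $g(S^k)$ is a Peano continuum with empty interior. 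Second, \emph{$g$ has no holonomy}: since $k\ge 2$, every Lipschitz loop $\sigma$ in $S^k$ bounds a Lipschitz disc $u\colon D^2\to S^k$, and because a Lipschitz curve in $\H^1$ is horizontal, the vertical coordinate $z$ satisfies $z(f(b))-z(f(a))=\int_{g\circ\sigma}\tfrac12(x\,dy-y\,dx)$ along any Lipschitz path $\sigma$ from $a$ to $b$; thus this ``holonomy integral'' depends only on $f(a)$ and $f(b)$, vanishes around loops, and (using the disc $u$ and the smallness of the image) $g\circ\sigma$ has winding number $0$ about every point of $\R^2\setminus g(S^k)$.

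Next I would construct the tree. Equipping the image with a length metric is useless --- that metric can be infinite and $g$ need not be Lipschitz for it --- so I would measure lengths upstairs instead. Put
\[
  \rho(x,y)=\inf_{\sigma}\ \length(f\circ\sigma),\qquad x,y\in S^k,
\]
the infimum of the Carnot--Carath\'eodory lengths of $f\circ\sigma$ over Lipschitz paths $\sigma$ in $S^k$ from $x$ to $y$. Then $\rho$ is a pseudometric with $d_{cc}(f(x),f(y))\le\rho(x,y)\le\Lip(f)\,d_{S^k}(x,y)$, so the quotient $Z:=S^k/\{\rho=0\}$, with the induced metric, is a compact length space and hence geodesic; the quotient map $\psi\colon S^k\to Z$ is $\Lip(f)$-Lipschitz; and $f$ and $g$ descend to $1$-Lipschitz maps $\varphi\colon Z\to\H^1$ and $\bar\varphi\colon Z\to\R^2$ with $f=\varphi\circ\psi$ and $g=\bar\varphi\circ\psi$. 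Everything then reduces to showing that the compact geodesic space $Z$ is an $\R$-tree, i.e.\ that it contains no embedded circle.

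This last point is the main obstacle, and it is where the sub-Riemannian geometry of $\H^1$ is genuinely used; I would deduce it by showing $Z$ is at most one-dimensional and simply connected. One-dimensionality should come from the smallness of the image together with the absence of holonomy: any putative $2$-cell in $Z$ would project under $\bar\varphi$ either onto a planar set of positive $\mathcal{H}^2$-measure, contradicting $\mathcal{H}^2(g(S^k))=0$, or onto a planar arc, in which case --- since moving transversally within the cell cannot change the vertical coordinate of $\varphi$ (the holonomy integral of a back-and-forth planar path being zero) --- $\varphi$ would collapse a genuine arc of $Z$ to a point, which is ruled out by the definition of $\rho$ (a connected $\rho$-zero set on which $f$ is constant is a single point). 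Simple connectivity should come from $\pi_1(S^k)=0$ via the fact that $\psi$ is, up to a harmless refinement, a monotone map, i.e.\ has connected point-preimages: loops in $Z$ then lift to $S^k$, bound there since $k\ge 2$, and a null-homotopy pushes forward. Handling these two continuum-theoretic points carefully --- controlling the non-injectivity of $\bar\varphi$ and $\varphi$, and the behaviour of $\pi_1$ under the monotone factorization --- is the technical heart of the argument.

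Once $Z$ is known to be a metric tree, we are done: $\psi$ and $\varphi$ are Lipschitz and $f=\varphi\circ\psi$ by construction. (If one prefers to build $Z$ from $g$ alone, then $f$ descends to the \emph{same} $Z$ automatically, because $\psi(x)=\psi(y)$ forces $g(x)=g(y)$ and hence, by the holonomy identity applied to a path from $x$ to $y$, forces $f(x)=f(y)$; the descended map is $1$-Lipschitz for the same reason $\varphi$ is.) It is worth emphasizing why this is special to $n=1$: for $\H^n$ with $n\ge 2$ the Pansu differential may have rank up to $n$, the image need not be curve-like, the base $\R^{2n}$ of the Heisenberg fibration is large, and Theorems~\ref{thm:heisHigher} and~\ref{thm:euclHigher} in fact produce Lipschitz $k$-spheres in $\H^n$ that are very far from factoring through a tree. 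For $\H^1$ the crux is precisely that rank $\le 1$ makes $f$ one-dimensional while the base $\R^2$ is two-dimensional, so that the vanishing of winding numbers is a real constraint; converting that constraint, together with $\pi_1(S^k)=0$, into the tree-ness of $Z$ is the whole difficulty.
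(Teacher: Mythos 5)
Your construction of $Z$ is exactly the paper's: you pull back the length pseudometric $\rho(x,y)=\inf_\sigma\length(f\circ\sigma)$, take the quotient, and get Lipschitz maps $\psi$ and $\varphi$ with $f=\varphi\circ\psi$ and $Z$ a compact length (hence geodesic) space. But the entire content of the theorem is the step you defer: showing that $Z$ is an $\R$-tree. You yourself label the two continuum-theoretic claims (that $Z$ is one-dimensional and that it is simply connected, and that these together with geodesicity force tree-ness) as ``the technical heart,'' and the sketches you give for them do not work as stated. Topological dimension is not detected by ``putative $2$-cells,'' so the dichotomy ``a $2$-cell projects under $\bar\varphi$ onto a set of positive measure or onto an arc'' is not an argument; and the simple-connectivity transfer relies on $\psi$ being (essentially) monotone, but the fibers $\{x':\rho(x,x')=0\}$ need not be connected, and even for genuinely monotone maps the lifting of loops and pushing forward of null-homotopies you invoke is not automatic. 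Even granting one-dimensionality and simple connectivity, concluding ``$\R$-tree'' needs a further nontrivial fact (e.g.\ dendrite theory for one-dimensional Peano continua) that you neither prove nor cite. So as written there is a genuine gap precisely where the difficulty lies.

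For comparison, the paper closes this gap without ever using the abelianization $\H^1\to\R^2$ or winding numbers of $g=\pi\circ f$; it proves the more general Theorem~\ref{thm:factor-tree} (quasi-convex $X$ with $\pilip_1(X)=0$, purely $2$-unrectifiable target $Y$). The key step is Lemma~\ref{lem:triv-curve-integral}: for \emph{every} Lipschitz map $\pi:Z\to\R^2$ and every Lipschitz loop $\alpha$ in $Z$, the signed area $A(\pi\circ\alpha)$ vanishes. This is proved by filling a loop in $X$ with a Lipschitz disc $\varrho$ (using $\pilip_1(X)=0$), using pure $2$-unrectifiability and Kirchheim's area formula to see that $\md(f\circ\varrho)_z$ is degenerate a.e., and then a Lebesgue-density/Fubini argument to upgrade this to degeneracy of $\md(\psi\circ\varrho)_z$ a.e., so that Stokes' theorem kills the signed area; general loops in $Z$ are then handled by approximation by $\psi\circ\beta_n$. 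Combined with Lemma~\ref{lem:inj-Lip-loop} (an injective Lipschitz loop always admits some Lipschitz projection to $\R^2$ with nonzero signed area, built from distance functions) and \cite[Prop.~3.1]{Wenger-tree}, this shows $Z$ has no embedded circles and is a metric tree. If you want to salvage your route, the quantitative degeneracy of the pulled-back metric on filling discs --- not just $\mathcal{H}^2(g(S^k))=0$ and vanishing winding numbers --- is the ingredient you are missing, and your holonomy/winding-number observations would at best recover the special case $Y=\H^1$ of what the signed-area lemma does in general.
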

Recall that a metric tree or $\R$-tree is a geodesic metric space such
that every geodesic triangle is isometric to a tripod.  Note that
these trees may still have large images; for instance, Haj{\l}asz and Tyson \cite{HajTysCarnotSurjections} have adapted Kaufman's construction
\cite{KaufmanSingular} to produce a
$C^1$ horizontal surjection $\mathbb{R}^5\to \H^1$.  As a consequence of
Theorem~\ref{thm:sphere-factor-tree} we obtain:
\begin{cor}\label{cor:sphere-factor-tree}
  If $k\ge 2$ and $\alpha:S^k\to \H^1$, then $\alpha$ is Lipschitz
  null-homotopic.  Furthermore, for any $\epsilon>0$, $\alpha$ is
  $\epsilon$-close to a map whose image has Hausdorff dimension $1$.
\end{cor}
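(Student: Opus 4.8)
The plan is to derive the corollary from Theorem~\ref{thm:sphere-factor-tree} together with two elementary facts about metric trees: they are Lipschitz contractible, and every compact subtree is approximated at all scales by a finite subtree onto which there is a $1$-Lipschitz retraction. Using Theorem~\ref{thm:sphere-factor-tree}, write $\alpha=\varphi\circ\psi$ with $\psi\colon S^k\to Z$ and $\varphi\colon Z\to\H^1$ Lipschitz and $Z$ a metric tree. Since $S^k$ is compact and connected, $Y:=\psi(S^k)$ is a compact connected subset of $Z$; a connected subset of a tree is geodesically convex, so $Y$ is itself a compact metric tree of finite diameter, and from now on we work with $Y$ and the restriction $\varphi|_Y$, which is still Lipschitz.

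To null-homotope $\alpha$, fix a basepoint $z_0\in Y$ and define the geodesic contraction $H\colon Y\times[0,1]\to Y$ by letting $H(y,t)$ be the point of the arc $[y,z_0]$ at distance $t\,d(y,z_0)$ from $y$; this is well defined because $Y$ is convex, and $H(\cdot,0)=\mathrm{id}_Y$ while $H(\cdot,1)\equiv z_0$. A short computation with the tripod description of geodesic triangles (equivalently, convexity of the metric in the $\mathrm{CAT}(0)$ space $Y$) shows $d(H(y,t),H(y',t))\le(1-t)\,d(y,y')$ and $d(H(y,t),H(y,s))=|t-s|\,d(y,z_0)\le|t-s|\diam Y$, so $H$ is Lipschitz. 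Hence $G:=\varphi\circ H\circ(\psi\times\mathrm{id}_{[0,1]})\colon S^k\times[0,1]\to\H^1$ is a Lipschitz homotopy with $G(\cdot,0)=\alpha$ and $G(\cdot,1)$ constant, so $\alpha$ is Lipschitz null-homotopic.

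For the approximation statement, put $L:=\Lip(\varphi|_Y)$; we may assume $\alpha$ is nonconstant (otherwise replace it by a short nondegenerate horizontal arc near its image, which already has Hausdorff dimension $1$), so that $L>0$. Given $\epsilon>0$, use total boundedness of the compact tree $Y$ to pick finitely many points $p_1,\dots,p_m\in Y$ whose balls of radius $\epsilon/(2L)$ cover $Y$, and let $Y_\epsilon\subseteq Y$ be their convex hull. Building it up one arc at a time, one sees $Y_\epsilon$ is a finite union of geodesic segments, hence has Hausdorff dimension $1$, and it is $\epsilon/(2L)$-dense in $Y$. Let $r\colon Y\to Y_\epsilon$ be the nearest-point projection; it is $1$-Lipschitz because $Y_\epsilon$ is a closed convex subset of the complete $\mathrm{CAT}(0)$ space $Y$, and $d(y,r(y))\le\epsilon/(2L)$ for all $y$. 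Then $\alpha':=\varphi\circ r\circ\psi$ is Lipschitz, $\sup_{x\in S^k}d_{\H^1}(\alpha(x),\alpha'(x))\le L\cdot\epsilon/(2L)<\epsilon$, and $\alpha'(S^k)\subseteq\varphi(Y_\epsilon)$ is a Lipschitz image of a $1$-dimensional set, hence has Hausdorff dimension at most $1$; carrying out the construction with $\min\{\epsilon,\diam\alpha(S^k)\}$ in place of $\epsilon$ if necessary, this image is connected and compact with more than one point, hence of Hausdorff dimension exactly $1$ (and this $\alpha'$ is still $\epsilon$-close to $\alpha$).

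I expect the only genuine subtlety to be in this last step: a compact metric tree can have Hausdorff dimension strictly larger than $1$ (e.g.\ the Brownian continuum random tree), so the finite-subtree approximation is really needed, and one must check that the retraction onto it is Lipschitz (so that $\alpha'$ is Lipschitz), moves points by less than $\epsilon/L$ (so that $\alpha'$ is $\epsilon$-close to $\alpha$), and lands in a $1$-dimensional set. The remaining points are formal, using Theorem~\ref{thm:sphere-factor-tree} and the $\mathrm{CAT}(0)$ geometry of trees.
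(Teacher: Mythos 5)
Your proof is correct and follows essentially the same route as the paper: factor $\alpha=\varphi\circ\psi$ through a tree via Theorem~\ref{thm:sphere-factor-tree}, contract the tree by the (Lipschitz) geodesic contraction to get the null-homotopy, and for the approximation project onto a finite subtree by the $1$-Lipschitz nearest-point map and compose with $\varphi$. The only cosmetic difference is that you take the finite net inside $\psi(S^k)$ while the paper takes an $\epsilon/\lambda$-net $E\subset S^k$ and uses the convex hull of $\psi(E)$; your version also spells out the CAT(0) details the paper leaves implicit.
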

\begin{proof}
  For the first statement, since $Z$ is a metric tree, it is contractible by a Lipschitz
  homotopy $h:Z\times [0,1]\to Z$.  Composing this with $\psi$ and
  $\varphi$ gives a Lipschitz homotopy contracting $\alpha$ to a
  point.

  For the second statement, let $\lambda=\Lip(\alpha)$ and let $E$ be a
  finite $\epsilon/\lambda$ net of points in $S^k$.  Let $T$ be the
  convex hull of $\psi(E)$ in $Z$; this is a finite tree.  The
  closest-point projection $p:Z\to T$ is Lipschitz and moves each
  point of $\psi(S^k)$ a distance at most $\epsilon$, so $\varphi\circ p \circ \psi$
  is a Lipschitz map which is $\epsilon$-close to $\alpha$.  Its image
  is $\varphi(T)$, which has Hausdorff dimension $1$.
\end{proof}

Consequently, $\pilip_k(\H^1)=\{0\}$ for all $k\geq 2$.  In general, a
Lipschitz map $\alpha:X\to \H^n$ need not be $\epsilon$-close to a map
whose image has Hausdorff dimension $n$; the homotopies
constructed in Theorem~\ref{thm:heisHigher} cannot be approximated by
such maps.

Theorem~\ref{thm:sphere-factor-tree} is a special case of the
following theorem.  Recall that a metric space $(X, d)$ is said to be
quasi-convex if there exists $C$ such that any two points $x,x'\in X$
can be joined by a curve of length at most $Cd(x,x')$. Furthermore, a
metric space $(Y,d)$ is called purely $k$-unrectifiable if
$\haus^k(\varrho(C)) = 0$ for every Lipschitz map $\varrho$ from a
Borel subset $C\subset \R^k$ to $Y$. It
was shown in \cite{Ambrosio-Kirchheim-rectifiable,
  Magnani-unrectifiable} that the Heisenberg group $\H^n$, endowed
with a Carnot-Carath\'eodory metric, is purely $k$-unrectifiable for
$k\geq n+1$.

\begin{thm}\label{thm:factor-tree}
 Let $X$ be a quasi-convex metric space with $\pilip_1(X)=0$. Let furthermore $Y$ be a purely $2$-unrectifiable metric space. Then every Lipschitz map from $X$ to $Y$ factors through a metric tree. 
\end{thm}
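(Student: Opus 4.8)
The plan is to exhibit the tree as an explicit metric quotient of $X$ and then, using $\pilip_1(X)=0$ together with a filling argument, to reduce the crucial point to the model situation of a Lipschitz map of a $2$-disk, where metric differentiation is available. Given a Lipschitz map $f\colon X\to Y$, define a pseudometric on $X$ by letting $\rho(x,x')$ be the infimum of $\length(f\circ\gamma)$ over rectifiable curves $\gamma$ in $X$ from $x$ to $x'$. Quasi-convexity guarantees $\rho(x,x')\le C\Lip(f)\,d_X(x,x')$, and $\rho$ is readily seen to be an intrinsic (length) pseudometric. Let $Z$ be the associated metric space, $\psi\colon X\to Z$ the Lipschitz quotient map, and $\varphi\colon Z\to Y$ the $1$-Lipschitz map with $f=\varphi\circ\psi$ that arises from $d_Y(f(x),f(x'))\le\rho(x,x')$. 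Then $Z$ is a length space, so after passing to the completion (which, since $Y$ is complete, does not disturb the factorization through $Y$) it is an $\R$-tree as soon as $\rho$ satisfies Gromov's $4$-point inequality with constant $0$.

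To check that inequality, fix $x_0,\dots,x_3\in X$ and $\epsilon>0$, and for each pair $i\ne j$ choose a curve $\gamma_{ij}$ from $x_i$ to $x_j$ with $\length(f\circ\gamma_{ij})<\rho(x_i,x_j)+\epsilon$. Since $K_4$ is planar, one can draw it inside $D^2$ with vertices corresponding to the $x_i$ and edges parametrized by the $\gamma_{ij}$; filling the three bounded faces of this drawing by Lipschitz disks---possible because $\pilip_1(X)=0$---and gluing along the shared edges yields a Lipschitz map $h\colon D^2\to X$ whose restriction to the $1$-skeleton is the family $\{\gamma_{ij}\}$. Apply the key lemma below to $g:=f\circ h\colon D^2\to Y$: the quotient $T:=D^2/\rho_g$, where $\rho_g(a,b)$ is the infimum of $\length(g\circ\sigma)$ over curves $\sigma$ in $D^2$ from $a$ to $b$, is an $\R$-tree. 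Writing $t_i\in T$ for the image of the vertex corresponding to $x_i$, one has $\rho(x_i,x_j)\le d_T(t_i,t_j)\le\length(f\circ\gamma_{ij})<\rho(x_i,x_j)+\epsilon$ for all six pairs: the lower bound because curves in $D^2$ project under $h$ to curves in $X$, the upper bound by using the $\gamma_{ij}$-edge as a competitor. Feeding these estimates into the $4$-point inequality for the tree metric $d_T$ and letting $\epsilon\to0$ yields the inequality for $\rho$, hence for $d_Z$.

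The heart of the argument is therefore the key lemma: \emph{if $g\colon D^2\to Y$ is Lipschitz and $Y$ is purely $2$-unrectifiable, then $W:=D^2/\rho_g$ is an $\R$-tree.} The space $W$ is a compact length space, hence geodesic, and a geodesic space which fails to be an $\R$-tree contains a rectifiable embedded circle, i.e.\ an injective Lipschitz loop; so it suffices to rule these out. By Kirchheim's metric differentiability theorem, $\md g_z$ exists for a.e.\ $z$, and pure $2$-unrectifiability forces $\operatorname{rank}\md g_z\le1$ a.e.\ (rank $2$ on a set of positive measure would make $g$ bi-Lipschitz there and produce a $2$-rectifiable piece of $g(D^2)$ with positive $\haus^2$). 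Since the quotient map $\pi_g\colon D^2\to W$ satisfies $\md(\pi_g)_z=\md g_z$ for a.e.\ $z$ (from $d_Y(g(a),g(b))\le\rho_g(a,b)\le\length(g\circ[a,b])$ and metric differentiability), every Lipschitz map $D^2\to W$ that factors as $\pi_g\circ\tilde H$ with $\tilde H\colon D^2\to D^2$ Lipschitz has metric differential of rank $\le1$ a.e., hence zero Jacobian a.e., hence its push-forward of $\Lbrack D^2\Rbrack$ has zero mass and thus vanishes as a metric current; taking boundaries, every Lipschitz loop in $W$ that lifts through $\pi_g$ to a Lipschitz loop in $D^2$ sends $\Lbrack S^1\Rbrack$ to $0$. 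Because $\pi_g$ is a surjective length-quotient, an arbitrary Lipschitz loop in $W$ is approximated, uniformly and with bounded length, by such loops, and so it too annihilates $\Lbrack S^1\Rbrack$ (weak convergence of the push-forward currents suffices, as each of them is already $0$). An injective Lipschitz loop $c\colon S^1\to W$, however, cannot do this: by the metric area formula $\mass(c_\#\Lbrack S^1\Rbrack)=\haus^1(c(S^1))=\length(c)>0$, since the injectivity of $c$ precludes cancellation. This contradiction establishes the key lemma.

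The principal obstacle is the key lemma, and its two substantive ingredients are the two facts about metric currents used above: that a Lipschitz image of $D^2$ in a purely $2$-unrectifiable space carries no mass (where the rank bound and Ambrosio--Kirchheim theory enter), and that an injective Lipschitz loop carries a nonzero $1$-current. Beyond these, care is needed in the approximation step---lifting loops from $W$ to $D^2$ with enough length control that the push-forward currents converge---and in extracting a rectifiable embedded circle from a non-tree geodesic space; the planar $K_4$ filling and the bookkeeping with the $4$-point inequality are routine. For general (incomplete, non-proper) $X$ one must, as noted, work with the completion of $Z$, which is harmless here because the target $\H^n$ is complete.
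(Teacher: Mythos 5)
The single step that carries all the analytic content is your assertion that the quotient map $\pi_g\colon D^2\to W$ satisfies $\md(\pi_g)_z=\md g_z$ for a.e.\ $z$, justified only by ``$d_Y(g(a),g(b))\le\rho_g(a,b)\le\length(g\circ[a,b])$ and metric differentiability''. The first inequality does give $\md g_z\le\md(\pi_g)_z$, but the second does not give the reverse bound at a given point: taking $a=z$, $b=z+rv$ yields $\rho_g(z,z+rv)\le\int_0^r\md g_{z+tv}(v)\,dt$, and to deduce $\md(\pi_g)_z(v)\le\md g_z(v)$ you would need $\frac{1}{r}\int_0^r\md g_{z+tv}(v)\,dt\to\md g_z(v)$, i.e.\ that $z$ is a Lebesgue point of $\md g_{\cdot}(v)$ \emph{along the segment} in direction $v$; Lebesgue differentiation controls averages over balls, not over line segments (which are null sets), so this is not available for a.e.\ $z$ without further work. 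This is exactly where the paper's proof does its real work: there the degeneracy of $\md(f\circ\varrho)$ is transferred to $\md(\psi\circ\varrho)$ by fixing a finite net of directions $v_i$, taking Lebesgue density points of the sets $A_i=\{z:\md(f\circ\varrho)_z(v_i)\le\varepsilon\}$, and using Fubini to find a nearby parallel segment on which the bad set is small, which produces a competitor path of small $f$-length. Your claimed identity is in fact true and can be proved, for instance, by observing $\length(\pi_g\circ\sigma)\le\length(g\circ\sigma)$ for every Lipschitz curve $\sigma$, restricting to a.e.\ line in a fixed direction $v$, comparing the two length integrands a.e.\ on that line, and then running over a countable dense set of directions; but some argument of this kind (or the paper's density-point argument) must be supplied. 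As written, this is a genuine gap, and it is the heart of the matter.

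Two secondary points, and a comparison. First, you assert that a geodesic space which is not an $\R$-tree contains an \emph{injective Lipschitz} loop; this is true but needs an argument (e.g.\ take a geodesic triangle with one side not contained in the union of the other two and splice subarcs of the geodesic sides into a rectifiable Jordan curve), whereas the paper sidesteps it by invoking Proposition~3.1 of \cite{Wenger-tree} (geodesic $+$ vanishing signed areas implies tree) and using its injective-loop lemma (Lemma~\ref{lem:inj-Lip-loop}) only to establish geodesicity of $Z$. Second, the theorem does not assume $Y$ complete, so ``pass to the completion of $Z$'' is problematic for extending $\varphi$; the completion is avoidable, since a $0$-hyperbolic length space embeds isometrically in an $\R$-tree and is then automatically geodesic (any path between two points of $Z$ contains the tree arc, which therefore lies in $Z$), or one can prove geodesicity of $Z$ directly as the paper does. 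Modulo these repairs, your architecture is genuinely different from the paper's and viable: you reduce the general quasi-convex $X$ to the disk case via a $K_4$ filling (using $\pilip_1(X)=0$) and the four-point condition, and you detect non-trees with Ambrosio--Kirchheim currents (mass, pushforward, boundary) where the paper works directly with $Z=X/d_f$, the elementary signed-area functionals $A(\pi\circ\alpha)$, and an explicit two-function construction; the trade-off is heavier machinery in exchange for a cleaner reduction to the model case $D^2\to Y$.
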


Theorem~\ref{thm:factor-tree} will be proved in Section~\ref{section:factor-tree}. If $C$ is the quasi-convexity constant and $\psi$ and $\varphi$ are as above, then $\psi$ can be chosen to be $C\Lip(f)$-Lipschitz and $\varphi$ to be $1$-Lipschitz.  As a corollary, we find that $\pilip_k(Y) = \{0\}$ for all $k\geq 2$ and every purely $2$-unrectifiable space $Y$.

\noindent\emph{Acknowledgments:} We would like to thank Piotr Haj{\l}asz, Larry Guth, and the anonymous referee for several helpful discussions and comments.

\section{Preliminaries}
In this section we briefly collect some of the basic definitions and properties of the Heisenberg groups. We furthermore recall the necessary definitions of metric derivatives in metric spaces which will be needed for the proof of Theorem~\ref{thm:factor-tree}.

\subsection{Heisenberg groups}
The $n$th Heisenberg group $\H^n$, where $n\geq 1$, is the Lie group
given by $\H^n:= \R^{2n+1}= \R^n\times\R^n\times\R$ endowed with the
group multiplication
$$(x,y,z)\odot (x',y',z') = \left(x+x', y+y', z+z'+ \langle y,x'\rangle\right),$$
where $\langle \cdot,\cdot\rangle$ is the standard inner product on
$\R^n$. 
 A basis of left invariant vector fields on $\H^n$ is defined by 
 $$X_j = \frac{\partial}{\partial x_j} + y_j\frac{\partial}{\partial t}\quad\text{and}\quad Y_j = \frac{\partial}{\partial y_j}, \quad j=1, \dots, n,$$
 and $Z= \frac{\partial}{\partial z}$. The subbundle $H\H^n\subset T\H^n$ generated by the vector fields $X_j, Y_j, j=1,\dots, n$, is called the horizontal subbundle. A $C^1$-smooth map $f: M\to \H^n$, where $M$ is a smooth manifold, is called horizontal if the derivative $df$ of $f$ maps $TM$ to the horizontal sub bundle $H\H^n$.

 (There are many equivalent ways to define the Heisenberg group and
 its horizontal subbundle.  For example, for any symplectic form
 $\omega$ on $\R^{2n}$, one may define $\H^n:= \R^{2n+1}=
 \R^{2n}\times\R$ and let
 $$(v,z)\odot (v',z') = (v+v', z+z'+ \omega(v,v')).$$
 The group defined this way is isomorphic to the group defined above,
 and if $H'$ is any left-invariant subbundle of $T\H^n$ which is
 complementary to the bundle $Z= \frac{\partial}{\partial z}$, there
 is an isomorphism which takes $H'$ to $H\H^n$.)

The Heisenberg group $\H^n$ is naturally equipped with a family $(s_r)_{r>0}$ of dilation homomorphisms $s_r:\H^n\to\H^n$ defined by $$s_r(x,y,z):= (rx,ry, r^2z).$$

 Let $g_0$ be the left-invariant Riemannian metric on $\H^n$ such that the $X_j, Y_k, Z$ are pointwise orthonormal. The Carnot-Carath\'eodory metric on $\H^n$ corresponding to $g_0$ is defined by
\begin{equation*}
 d(x,y):= \inf\{\length_{g_0}(c): \text{ $c$ is a horizontal $C^1$ curve from $x$ to $y$}\},
\end{equation*}
where $\length_{g_0}(c)$ denotes the length of $c$ with respect to $g_0$. The metric $d$ on $\H^n$ is $1$-homogeneous with respect to the dilations $s_r$, that is, 
 $$d(s_r(w), s_r(w')) = rd(w,w')$$ for all $w,w'\in\H^n$. Throughout this paper, $\H^n$ will always be equipped with the Carnot-Carath\'eodory metric $d$ defined above or any metric which is biLipschitz equivalent to $d$.

\subsection{Metric derivatives}
We recall the definition of the metric derivative of a Lipschitz map from a Euclidean to a metric space, as introduced and studied by Kirchheim in \cite{Kirchheim-rectifiable}. For this, let $(X,d)$ be a metric space and $f:U\to X$ a Lipschitz map, where $U\subset\R^n$ is open. The metric derivative of $f$ at $x\in U$ in direction $v\in\R^n$ is defined by
\begin{equation*}
 \md f_x(v):= \lim_{r\to 0^+}\frac{d(f(x+rv), f(x))}{r}
\end{equation*}
if the limit exists. It was shown in \cite{Kirchheim-rectifiable,
  Ambrosio-Kirchheim-rectifiable} that for almost every $x\in U$ the
metric derivative $\md f_x(v)$ exists for all $v\in\R^n$ and defines a
semi-norm on $\R^n$. It can be shown (see for instance \cite[Thm.~2.7.6]{BurBurIva-Course}) that for any Lipschitz curve $c:[a,b]\to X$ we have $$\length(c) = \int_a^b\md c_t(1) dt.$$

\section{Constructing extensions}\label{sec:constExt}

In this section we prove Theorems~\ref{thm:heisHigher} and
\ref{thm:euclHigher}.  The restriction in these theorems that $k<2n-1$ can be weakened somewhat.  To state the
theorems in full generality, we will need to recall some facts about
the homotopy groups of wedges of spheres.

If $X=S^n\vee S^n$ is a wedge of $n$-spheres, let $\iota_i:S^n\to X$, $i=1,2$,
be the map into the $i$th factor of $X$.  In what follows, addition
will be taken in $\pi_k(S^n)$ or $\pi_k(X)$, so $\iota_1+\iota_2$ represents a sphere which wraps once around each factor of
the wedge product.

If $\beta:S^k\to S^n$, then  $(\iota_1+\iota_2) \circ \beta$ and
$\iota_1 \circ \beta+\iota_2 \circ \beta$ are not homotopic in general.  The
Hilton-Milnor theorem describes the difference between these two maps:
\begin{thm}[{cf.\ \cite[Thm.\ 8.3]{WhiteheadElts}}]
  If $n\ge 2$, there is an isomorphism
  $$\pi_{k}(X)\cong \pi_k(S^{n})\oplus \pi_k(S^{n}) \oplus \bigoplus_{j=0}^\infty \pi_k(S^{q_j(n-1)+1}),$$
  with $q_j$ a sequence of integers going to $\infty$ with $q_j\ge 2$.

  There are homomorphisms $h_j:\pi_k(S^n)\to \pi_k(S^{q_j(n-1)+1})$,
  $j=0,1,2,\dots$, such that if $\beta\in \pi_{k}(S^n)$, then
  $$(\iota_1+ \iota_2) \circ \beta=(\iota_1\circ \beta)+(\iota_2\circ \beta)+\sum_{j=0}^\infty w_j\circ h_j(\beta)$$ where 
  $$w_j:S^{q_j(n-1)+1}\to X$$
  is an iterated Whitehead product of $\iota_1$ and $\iota_2$ with $q_j$ terms.
\end{thm}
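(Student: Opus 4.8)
The plan is to obtain the statement by specializing the Hilton--Milnor theorem to $X=S^n\vee S^n$ via James's splitting of $\Sigma\Omega\Sigma$. Since $n\ge 2$ we may write $X=\Sigma A$ with $A=S^{n-1}\vee S^{n-1}$, so by $\pi_k(X)\cong\pi_{k-1}(\Omega X)$ it is equivalent to split $\Omega\Sigma A$ up to weak homotopy equivalence. Let $w$ range over a Hall basis of the free Lie algebra on two generators $x_1,x_2$ --- the basic Whitehead products --- and for $w$ of weight $q$ let $A_w$ be the corresponding $q$-fold smash product of copies of $S^{n-1}$, so $A_w\cong S^{q(n-1)}$ and $\Sigma A_w\cong S^{q(n-1)+1}$. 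Using James's equivalence $\Sigma\Omega\Sigma B\simeq\bigvee_{i\ge 1}\Sigma B^{\wedge i}$ one builds, out of loop multiplication and the adjoints of the iterated Whitehead product maps $w(\iota_1,\iota_2)\colon\Sigma A_w\to X$, a comparison map
$$\Phi\colon\ \prod_{w}\Omega\Sigma A_w\ \longrightarrow\ \Omega\Sigma A.$$

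The key step is to show that $\Phi$ is a weak equivalence, and I would do this on homology. By the Bott--Samelson theorem $H_*(\Omega\Sigma B;\Z)\cong T(\widetilde H_*(B;\Z))$, the tensor algebra, whenever $\widetilde H_*(B;\Z)$ is free --- which holds here --- so the target of $\Phi$ has homology the tensor algebra on two degree-$(n-1)$ generators. The Poincar\'e--Birkhoff--Witt theorem identifies $T(V)$ with the enveloping algebra of the free Lie algebra $L(V)$, and the Hall-basis decomposition of $L(V)$ factors this, as a graded module, into the tensor product over $w$ of the parts generated by the basic products; matching this with the homology of $\prod_w\Omega\Sigma A_w$ and checking that $\Phi$ realizes the factorization is the heart of the argument. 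Since loop spaces are simple, an integral homology isomorphism between them is a weak equivalence, so $\Phi$ is one. Taking $\pi_{k-1}$ and using $\pi_{k-1}(\Omega\Sigma A_w)\cong\pi_k(S^{q(n-1)+1})$ yields the asserted direct-sum decomposition: the two weight-one basic products $x_1,x_2$ give the two copies of $\pi_k(S^n)$, and enumerating the remaining basic products as $w_0,w_1,\dots$ with weights $q_0,q_1,\dots$ one has $q_j\ge 2$, while the Witt formula produces at least one basic product of every weight, so $q_j\to\infty$.

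For the second assertion I would trace the decomposition through $\iota_1+\iota_2\colon S^n\to X$. By construction the summand inclusions of the two $\pi_k(S^n)$ factors are $(\iota_1)_*$ and $(\iota_2)_*$; composing $\iota_1+\iota_2$ with the retraction $X\to S^n$ onto either wedge factor gives $\mathrm{id}_{S^n}$ (postcomposition respects the group operation, $r\circ(f+g)=r\circ f+r\circ g$ for maps out of a suspension), so those two components of $(\iota_1+\iota_2)\circ\beta$ are exactly $\iota_1\circ\beta$ and $\iota_2\circ\beta$. Setting $h_j:=H_j\circ(\iota_1+\iota_2)_*$, where $H_j\colon\pi_k(X)\to\pi_k(S^{q_j(n-1)+1})$ is the James--Hopf projection onto the $w_j$-summand, defines a homomorphism, and the summand inclusion $\pi_k(S^{q_j(n-1)+1})\to\pi_k(X)$ is precisely postcomposition with the iterated Whitehead product $w_j=w_j(\iota_1,\iota_2)$; assembling the components gives the displayed formula (the sum being effectively finite, since $\pi_k(S^{q_j(n-1)+1})=0$ once $q_j(n-1)+1>k$). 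I expect the main obstacle to be the weak-equivalence claim for $\Phi$ --- setting up James's splitting, assembling the factors compatibly, and pushing the PBW/Hall-basis bookkeeping through on homology --- together with the identification of the summand inclusions with iterated Whitehead products, which rests on the dictionary between Samelson products in $\Omega\Sigma A$ and Whitehead products in $\Sigma A$ under the loop--suspension adjunction. As this is a classical result, in the paper we simply cite \cite[Thm.~8.3]{WhiteheadElts}.
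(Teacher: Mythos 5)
The paper does not actually prove this statement---it is quoted as a classical result with a pointer to Whitehead's \emph{Elements of Homotopy Theory}---and your sketch is precisely the standard Hilton--Milnor argument (James splitting, Bott--Samelson homology plus PBW/Hall-basis bookkeeping, the Samelson--Whitehead dictionary for the summand inclusions) that underlies the cited source, so you are consistent with the paper's treatment of it as known. The outline is sound; in a full write-up the only points needing care are that $\Phi$ should be defined on the weak (restricted) product of the $\Omega\Sigma A_w$ (with the homology and homotopy comparisons passing through finite subproducts, using that the connectivities of the $A_w$ tend to infinity), and the identification of the two weight-one projections with the maps induced by the wedge retractions, which works exactly as you say because every basic product of weight at least two involves both generators and hence dies under either retraction.
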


These $h_j(\beta)$'s are invariants of $\beta$ known as the \emph{Hopf-Hilton
  invariants}.  By the theorem, the Hopf-Hilton invariants vanish if
and only if
\begin{equation}\label{eq:hopf-hilton-vanish-equiv}
 (\iota_1+\iota_2) \circ \beta = \iota_1 \circ\beta+\iota_2\circ\beta.
 \end{equation}
In fact, the Hopf-Hilton invariants are the obstruction to the distributive law
holding for compositions of the form $(\alpha_1+\alpha_2)\circ
\beta$.  For any based space $Y$ and any based maps
$\alpha_1,\alpha_2:S^n\to Y$, we can define a map $\alpha:S^n\vee
S^n\to Y$ which is $\alpha_1$ on one wedge factor and $\alpha_2$ on
the other.  If \eqref{eq:hopf-hilton-vanish-equiv} holds, then 
\begin{align*}
  (\alpha_1+\alpha_2)\circ \beta&=\alpha \circ (\iota_1 + \iota_2)\circ\beta\\ 
  &=\alpha \circ (\iota_1 \circ\beta+ \iota_2\circ\beta)\\ 
  &=\alpha \circ \iota_1 \circ\beta+\alpha \circ \iota_2\circ\beta\\ 
  &=\alpha_1\circ\beta+\alpha_2\circ\beta,
\end{align*}
so the distributive law holds for $\beta$.  

The main application of the Hopf-Hilton invariants in this paper
involves maps to $m$-fold wedges of spheres.  Let $Y=S^n\vee\dots\vee
S^n$ be an $m$-fold wedge of $n$-spheres and let $i_j:S^n\to Y$, $j=1,\dots,
m$ be the map into the $j$th factor of $Y$.  If $\beta\in
\pi_k(S^n)$ is a map whose Hopf-Hilton invariants vanish, then
\begin{equation}
  \tag{$*$}\biggl(\sum_{j=1}^m i_j\biggr) \circ \beta = \sum_{j=1}^m
(i_j \circ \beta).\label{eq:HH}
\end{equation}
That is, the map that first sends $S^k$ to $S^n$ by $\beta$, then
wraps $S^n$ around all the wedge factors of $Y$ can be homotoped into
a sum of maps, each with image lying in a single wedge factor.

A notable case where the Hopf-Hilton invariants of $\beta$ vanish is when
$\beta\in \pi_k(S^n)$ is a
suspension of a map $\beta_0\in \pi_{k-1}(S^{n-1})$.  This is easiest to
see if we write $\beta$ in ``cylindrical'' coordinates.  That is, we
write points of $S^n$ as points in $S^{n-1}\times [0,1]$, identifying
$S^{n-1}\times \{0\}$ and $S^{n-1}\times \{1\}$ with the poles of
$S^n$.  In these coordinates, we can write $\beta(x,t)=(\beta_0(x),t)$.
Note that this map takes the northern and southern hemispheres of
$S^k$ to the northern and southern hemispheres of $S^n$, respectively.  

We can likewise view $S^n\vee S^n$ in ``cylindrical'' coordinates by
identifying it with $S^{n-1}\times [0,1]$.  In this case, we identify
$S^{n-1}\times \{0\}$ and $S^{n-1}\times \{1\}$ with the poles of
$S^n$ and identify the equator $S^{n-1}\times \{1/2\}$ with the
basepoint of the wedge.  Then we can represent $\iota=\iota_1 + 
\iota_2$ by the quotient map $S^n\to S^n\vee S^n$ which
collapses the equator of $S^n$ to the basepoint of the wedge and
write 
$$(\iota\circ \beta)(x,t)=(\beta_0(x),t)\in S^n\vee S^n.$$

The map $\iota\circ \beta$ wraps the northern hemisphere of $S^k$
around one of the factors of $S^n\vee S^n$, sends the equator to the
basepoint of the wedge, and wraps the southern hemisphere around the
other factor.  If we collapse the equator of $S^k$ to a point to get
$S^k\vee S^k$, then $\iota\circ \beta$ induces a map $S^k\vee S^k\to
S^n\vee S^n$ that sends each $S^k$ to one of the $S^n$'s by a map
homotopic to $\beta$.  We thus have
$$\iota\circ \beta=\iota_1\circ\beta+\iota_2\circ \beta$$
as desired. 

We can then generalize Theorems~\ref{thm:heisHigher} and \ref{thm:euclHigher} as follows:
\begin{thm}\label{thm:heisHilton}
  Let $n\ge 2$ and $n+2\le k$.  Let $\alpha:S^n\to \H^n$ be a
  Lipschitz map and let $\beta:S^k\to S^n$ be a Lipschitz map
  such that \eqref{eq:HH} holds.  (For example, if $\beta$ is a suspension.)  Then $\alpha \circ \beta:S^k\to \H^n$ can be
  extended to a Lipschitz map $r:D^{k+1}\to \H^n$.
\end{thm}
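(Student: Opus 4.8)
The plan is to reduce the extension problem to the case treated by Kaufman's construction: a map that factors through a wedge of $n$-spheres. The key observation is that by hypothesis \eqref{eq:HH}, the composite $\alpha\circ\beta$ can be factored (up to Lipschitz homotopy) through a map whose image in each ``piece'' lies in a single copy of $\alpha(S^n)$, an $n$-dimensional set. Concretely, one should subdivide $S^k$ into many small cells, map each cell to $S^n$ so that the union of images of these pieces is (homotopic to) $\beta$, and then the condition \eqref{eq:HH} is exactly what allows this subdivided map to be Lipschitz homotopic to $\alpha\circ\beta$ when we compose with $\alpha$. So I would first set up this combinatorial/homotopy-theoretic reduction, replacing $\alpha\circ\beta$ by a Lipschitz-homotopic map $g:S^k\to\H^n$ of a controlled, ``self-similar'' structure.

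Next I would bring in Kaufman's idea. Kaufman's construction produces a Lipschitz surjection of a cube onto a square with rank-$1$ derivative a.e., and the mechanism is a rapidly oscillating, self-similar map built by iterated refinement: at each scale one inserts finer and finer ``folds'' so that the map fills up area without ever having full-rank derivative. The analogue here is to build the filling $r:D^{k+1}\to\H^n$ scale by scale. The dimension inequality $n+2\le k<2n-1$ is what gives room: after collapsing $\beta$ along suspension coordinates, the obstruction to extending lives in homotopy groups $\pi_k(S^{q(n-1)+1})$ of highly-connected spheres (the Hilton summands), which vanish in this range, so the extension exists topologically; the task is to make it Lipschitz. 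The plan is to use a telescoping construction: extend over a sequence of ``annuli'' in $D^{k+1}$, at each stage using the dilation structure of $\H^n$ (the homogeneity $d(s_r w, s_r w')=r\,d(w,w')$) to rescale and glue copies of a fixed building block, so that the Lipschitz constants stay bounded.

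The main obstacle I expect is controlling the Lipschitz constant of the filling — this is exactly where Kaufman's singular construction is essential and where a naive topological filling fails. Each time we extend across one annulus, we must fill in a null-homotopy of a sphere in $\H^n$, and the difficulty is that the natural filling of an $n$-dimensional sphere by an $(n{+}1)$-dimensional disk in $\H^n$ can be large (this is the whole point of $\pilip_n(\H^n)\ne 0$). The trick, following Kaufman, is that we never need to fill an honest $n$-sphere: the maps we need to contract always factor through a wedge of lower-dimensional spheres (via the Hilton summands $S^{q_j(n-1)+1}$ with $q_j\ge 2$, so the relevant dimension is $\ge 2(n-1)+1 = 2n-1 > k$), and filling those is unobstructed and can be done with bounded distortion because the target of those maps is highly connected relative to the domain dimension. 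Quantifying this — choosing the refinement scales so the contributions to the Lipschitz constant form a convergent geometric series — is the technical heart of the argument. A secondary point to check is that the various pieces glue together continuously along the cell boundaries of $S^k$ and $D^{k+1}$ with matching Lipschitz bounds, which is routine but must be done carefully in the Carnot-Carath\'eodory metric rather than the Euclidean one.

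Finally, I would verify the two conclusions explicitly: the map $r$ is Lipschitz by the geometric-series estimate on Lipschitz constants across the telescope, and it restricts to $\alpha\circ\beta$ on $\partial D^{k+1}=S^k$ (up to the Lipschitz homotopy used in the reduction, which can be absorbed into a collar neighborhood). The remark after Theorem~\ref{thm:heisHigher} about Pansu differentiability and rank $\le n$ is then automatic from Pansu's theorem once we know $r$ is Lipschitz, since any Lipschitz map into $\H^n$ has Pansu differential of rank $\le n$ a.e. The Euclidean version, Theorem~\ref{thm:euclHigher}, follows from the same construction with $\R^{n+1}$ in place of $\H^n$ and the ordinary dilations in place of $s_r$, where the rank-$\le n$ condition is imposed directly on the building blocks rather than inherited from the ambient geometry.
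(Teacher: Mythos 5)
Your proposal captures the broad Kaufman-style strategy (a self-similar building block, iterated rescaling, geometric control of Lipschitz constants), but two of its load-bearing steps are not the mechanism that actually makes the theorem work, and as written they leave genuine gaps. First, your justification of the null-homotopy is miscalibrated: you argue that the Hilton summands live in $\pi_k(S^{q_j(n-1)+1})$ with $q_j(n-1)+1\ge 2n-1>k$, so the obstructions vanish. But Theorem~\ref{thm:heisHilton} does not assume $k<2n-1$; its only homotopy-theoretic hypothesis is \eqref{eq:HH}, and your dimension-count argument would only prove the weaker Theorem~\ref{thm:heisHigher}. The correct use of \eqref{eq:HH} is more concrete: one builds an $n$-complex $J$ homotopy equivalent to a wedge of $N$ $n$-spheres, maps the cube-with-holes $K\subset I^{k+1}$ into $J$ so that the outer boundary carries $\beta$ and each hole boundary carries a scaled copy of $\beta$, and checks by winding numbers that the attaching map of the $(k+1)$-cell equals $\bigl(\sum_i\iota_i\bigr)\circ\beta-\sum_i(\iota_i\circ\beta)$ in $\pi_k(J)$, which is zero precisely by \eqref{eq:HH}. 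Crucially, this null-homotopy takes place inside the finite $n$-dimensional complex $J$ (where a continuous homotopy can be smoothed to a Lipschitz one), not in $\H^n$; your plan instead asks for a Lipschitz homotopy of $\alpha\circ\beta$ inside $\H^n$ to a ``subdivided'' map, which is exactly the kind of step that cannot be taken for granted in this setting and is never justified in your write-up.

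Second, you do not supply the ingredient that lets the abstract complex be mapped into $\H^n$ with controlled Lipschitz constant, which is the technical heart of the Heisenberg case. The paper's building block is the $n$-skeleton $J(\epsilon)$ of a tiling of $((\partial I^{n+1})\times[0,1/\epsilon])\cup(I^{n+1}\times\{0\})$, and the key claim is that any Lipschitz $f:\partial I^{n+1}\to\H^n$ extends to $\bar f:J(\epsilon)\to\H^n$ with $\Lip\bar f\le c\,\Lip f$, uniformly in $\epsilon$; this uses the dilations $s_{\epsilon t}$ on vertices together with the low-dimensional Lipschitz extension theorem (Theorem~\ref{thm:heisExtend}), and it is only possible because $J$ is $n$-dimensional. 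Your appeal to ``bounded distortion because the target is highly connected'' is not an argument in the Lipschitz category. Moreover, the uniform Lipschitz bound on the limit map does not come from a convergent geometric series, as you suggest, but from an exact cancellation between two \emph{different} scaling factors: the domain holes shrink by $c\epsilon$ per stage while the target complexes shrink by $\epsilon$, so the factor $c^{-i}$ picked up by $\gamma_i:X_i\to Y_i$ cancels the factor $c^{i+1}$ lost by the extensions $\sigma_i:Y_i\to\H^n$, giving $\Lip(\sigma_i\circ\gamma_i)\le c(\Lip h)(\Lip\alpha)$ on each new piece. Without this two-scale bookkeeping (and the counting condition $N(\epsilon)\le\lfloor 1/(2c\epsilon)\rfloor^{k+1}$, which is where $k\ge n+2$ is used), the naive equal-scaling telescope you describe loses a factor of $c$ at every stage and the Lipschitz constants blow up.
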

\begin{thm}\label{thm:euclHilton}
  Let $n\ge 2$ and $n+1\le k$.  Let $\beta:S^k\to S^n$ be a Lipschitz map
  such that \eqref{eq:HH} holds.  Then $\beta$ can be
  extended to a Lipschitz map $\gamma:D^{k+1}\to \R^{n+1}$ whose derivative
  has rank $\le n$ a.e.
\end{thm}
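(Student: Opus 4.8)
\emph{Proof proposal.} I would build $\gamma$ by an iterated, Kaufman-type scheme. Think of $D^{k+1}$ as a round unit ball whose boundary carries $\beta$, with image in the standard $S^n=\partial D^{n+1}\subset\R^{n+1}$. I peel off a thin collar $S^k\times[0,\epsilon]$ on which I homotope $\beta$, and I arrange that at the inner end of the collar the sphere is cut into $m$ disjoint sub-balls $E_1,\dots,E_m$ (together with a complementary region) such that each $E_j$ carries a rescaled copy of $\beta$ mapping into a small sphere $\Sigma_j=\partial B_j$, where $B_1,\dots,B_m$ are disjoint balls of radius $\rho$ inside $D^{n+1}$, and the complementary region is collapsed into a lower-dimensional set joining the $\Sigma_j$ at a common point, so that $\bigcup_j\Sigma_j$ together with these connectors is a copy of the $m$-fold wedge $S^n\vee\dots\vee S^n$. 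I then recurse inside each $B_j$. This produces an infinite hierarchy of collars and nested residual balls; on the collars $\gamma$ is defined directly, the residual balls shrink to a measure-zero set, and $\gamma$ is the limit.

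The core is a single refinement step, built from two ingredients. First, a Kaufman-type construction: a Lipschitz homotopy $S^k\times[0,\epsilon]\to D^{n+1}$ from $\beta$ (into $\partial D^{n+1}$) to $\mathrm{pinch}_m\circ\beta$, where $\mathrm{pinch}_m\colon S^n\to S^n\vee\dots\vee S^n$ is the map collapsing $m-1$ meridians --- it represents $\sum_{j=1}^m i_j$ --- and the wedge is realized as $\bigcup_j\Sigma_j$ with the connectors. This homotopy cannot have its image in an $n$-dimensional set, since $\beta$ and $\mathrm{pinch}_m\circ\beta$ are in general not homotopic within any $n$-dimensional subset of $\R^{n+1}$ containing $\beta(S^k)$; being of rank $\le n$, it must therefore be of Kaufman type, covering an $(n+1)$-dimensional region by a rank-$\le n$ map. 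This is possible precisely because the domain $S^k\times[0,\epsilon]$ has dimension $k+1\ge n+2$, the threshold for such a map (realized by Kaufman's surjection $[0,1]^3\to[0,1]^2$ \cite{KaufmanSingular}); so $k\ge n+1$ is exactly the hypothesis needed. Second, \eqref{eq:HH}: it supplies a homotopy from $\mathrm{pinch}_m\circ\beta=\bigl(\sum_{j=1}^m i_j\bigr)\circ\beta$ to $\sum_{j=1}^m(i_j\circ\beta)$, and since this homotopy can be taken inside the $n$-dimensional set $\bigcup_j\Sigma_j$ (with connectors) it is automatically of rank $\le n$. Its output is $m$ clean rescaled copies of $\beta$, one in each $\Sigma_j$, which is precisely the configuration the recursion needs. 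It is essential that \eqref{eq:HH} leave no residual Whitehead-product terms: such terms genuinely obstruct a low-rank filling, which is why the hypothesis cannot be dropped (compare the Hopf-invariant examples of Haj{\l}asz--Schikorra--Tyson \cite{HaScTy}).

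For convergence I would take $m$ large and the balls $B_j$ suitably small: if the $m$ disjoint domain sub-balls have radius $\delta\sim m^{-1/(k+1)}$ and together occupy at most half the volume, and $\rho$ is chosen small enough (for instance $\rho\sim m^{-1}$), then the rescaling factor passed to the next stage, $\theta\sim\rho/\delta$, is less than $1/2$. Hence the Lipschitz constants of $\gamma$ on the depth-$N$ collars decay geometrically in $N$, the stagewise maps converge uniformly, and $\gamma\colon D^{k+1}\to\R^{n+1}$ is Lipschitz and restricts to $\beta$ on $S^k$. For the rank bound, almost every point of $D^{k+1}$ lies in the interior of a collar at some finite stage --- the complement is the union of the nested residual balls (total measure $\lesssim 2^{-N}\to 0$) together with the collar interfaces, a countable union of null sets --- and near such a point $\gamma$ agrees with an affine rescaling of one of the single-stage homotopies, which has rank $\le n$ a.e.; therefore $\gamma$ has rank $\le n$ a.e.

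I expect the main difficulty to be the first ingredient: adapting Kaufman's oscillatory construction to this setting so that the homotopy from $\beta$ to $\mathrm{pinch}_m\circ\beta$ is genuinely Lipschitz --- with a constant one can feed into the bookkeeping above --- together with the packing estimates behind $\theta<1$. The topological content, namely distributing $\beta$ over the wedge summands with no leftover terms, is by contrast handed to us directly by \eqref{eq:HH}.
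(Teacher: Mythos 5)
Your overall architecture (pinch $\beta$ into a wedge of small spheres, use \eqref{eq:HH} to redistribute $\beta$ over the wedge factors, recurse self-similarly at a smaller scale, and take a Lipschitz limit with a null residual set) is the same skeleton as the paper's proof, but your ``first ingredient'' is both unconstructed and motivated by a false claim, and it is exactly the technical heart of the argument. You assert that a homotopy from $\beta$ to $\mathrm{pinch}_m\circ\beta$ cannot have image in an $n$-dimensional subset of $\R^{n+1}$, must therefore be a new Kaufman-type rank-$\le n$ map sweeping out $(n+1)$-dimensional volume, and you defer its construction to ``adapting Kaufman's oscillatory construction.'' In fact such an $n$-dimensional set exists and makes the step elementary: working with cubes, let $J$ be the $n$-skeleton of the subdivision of $I^{n+1}$ into $N=\epsilon^{-(n+1)}$ subcubes $J_i$. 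Then $J$ is homotopy equivalent to $\vee^N S^n$ (both are homotopy equivalent to the $(n+1)$-ball with $N$ punctures), and the inclusion $\iota:\partial I^{n+1}\hookrightarrow J$ has winding number $1$ about each subcube center, hence is homotopic \emph{inside $J$} to $\sum_i\iota_i$, where $\iota_i:\partial I^{n+1}\to\partial J_i$ are the rescalings. Consequently $\iota\circ\beta\simeq\bigl(\sum_i\iota_i\bigr)\circ\beta$ within $J$, and together with \eqref{eq:HH} the class $\iota\circ\beta-\sum_i(\iota_i\circ\beta)$ vanishes in $\pi_k(J)$; so on the cube-with-holes $K=I^{k+1}\smallsetminus\bigcup K_i$ (with $\beta$ on $\partial I^{k+1}$ and rescaled copies of $\beta$ on the $\partial K_i$) the map extends over the single $(k+1)$-cell by ordinary obstruction theory, can be made Lipschitz by smoothing, and has rank $\le n$ automatically because its image lies in the $n$-complex $J$. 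No oscillatory construction is needed at any single stage.

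Where $k\ge n+1$ and the Kaufman phenomenon actually enter is a packing count, not a per-stage volume-covering homotopy: one needs $(2\epsilon)^{-(k+1)}\ge\epsilon^{-(n+1)}$ so that $N$ disjoint holes of side $\epsilon$, centered in domain subcubes of side $2\epsilon$, fit inside $I^{k+1}$; matching the domain and range scalings then keeps the Lipschitz constant of every stage equal to $\Lip h$, the leftover Cantor set in the domain is null, and it is only its image in the limit that fills positive $(n+1)$-volume --- the Kaufman-type covering appears for free on a null set, where the a.e.\ rank condition is vacuous. Your convergence bookkeeping (geometric decay of scales, null residual set) is fine in outline, but as written the proposal stands or falls with the missing first ingredient; replacing it by the skeleton argument above closes the gap and essentially recovers the paper's proof.
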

In particular, if $n\ge 2$ and $k<2n-1$, then for all $j$, we have
$q_j\ge 2$, so $h_j(\beta)\in \pi_k(S^{q_j(n-1)+1})=0$, and
\eqref{eq:HH} holds.  (Alternatively, one can note that $\beta$ is a
suspension by the Freudenthal suspension theorem \cite{Freuden}.)  Theorems~\ref{thm:heisHigher} and
\ref{thm:euclHigher} thus follow from Theorems~\ref{thm:heisHilton}
and \ref{thm:euclHilton}.

The proof of Theorem~\ref{thm:heisHilton} is based on that of Theorem~\ref{thm:euclHilton}, so we will prove it first. 
\begin{proof}[{Proof of Thm.\ \ref{thm:euclHilton}}]
  Let $I^n=[0,1]^n$ be the unit $n$-cube.  It
  suffices to consider the case that $\beta:\partial
  I^{k+1}\to \partial I^{n+1}$ and construct an extension of $\beta$
  to all of $I^{k+1}$.  

  Our construction is based on a construction of Kaufman
  \cite{KaufmanSingular}.  We will construct a map on a cube by defining a
  Lipschitz map $h$ on a cube with holes in it, then filling each of
  the holes with a scaling of $h$.  Repeating this process defines a
  Lipschitz map on all of the cube except a Cantor set of measure zero, so we finish
  by extending the map to the Cantor set by continuity.

  Let $\epsilon>0$ be such that
  $(2\epsilon)^{-(k+1)}>\epsilon^{-(n+1)}$ and $\frac{1}{\epsilon}\in \N$.  Subdivide
  $I^{n+1}$ into a grid of $N=\epsilon^{-(n+1)}$ cubes of side length
  $\epsilon$ and let $J$ be the $n$-skeleton of this grid.  Number the
  subcubes $1,2,\dots,N$ and let $J_i$ be the $i$th subcube.

  Subdivide $I^{k+1}$ into $(2\epsilon)^{-(k+1)}$ cubes of side length
  $2\epsilon$ and choose $N$ of these subcubes, numbered $1,\dots, N$.
  For $i=1,\dots,N$, we let $K_i$ be a cube of side length $\epsilon$,
  centered at the center of the $i$th subcube and let
  $$K=I^{k+1}\smallsetminus \bigcup_{i=1}^{N} K_i.$$

  We will define a Lipschitz map $h:K\to J$ that sends the boundaries
  of cubes in $K$ to the boundaries of cubes in $J$.  Since the image
  is an $n$-complex in $\R^{n+1}$, the derivatives of $h$ will have
  rank $\le n$ a.e.  First, we define $h$ on $\partial K$.  The
  boundary of $K$ is $\partial I^{k+1} \cup \bigcup \partial K_i$; let
  $h=\beta$ on $\partial I^{k+1}$, and define $h$ on $\partial K_i$ as
  a scaling and translation $\beta_i$ of $\beta$ which sends $\partial
  K_i$ to $\partial J_i$.  So far, this definition is Lipschitz.

  Next, we extend $h$.  Choose basepoints $x\in \partial I^{k+1}$ and
  $x_i\in \partial K_i$ and a collection of non-intersecting curves
  $\lambda_i$ connecting $x$ to $x_i$.  We can give $K$ the structure
  of a CW-complex, with vertices $x, x_1,\dots, x_{N}$; edges
  $\lambda_i$; $k$-cells $\partial I^{k+1}, \partial
  K_1,\dots, \partial K_{N}$; and a single $(k+1)$-cell.  We have
  already defined $h$ on all of the vertices and $k$-cells, and since
  $J$ is connected, we can extend $h$ to the edges of $K$.  It only
  remains to extend it to the $(k+1)$-cell.
  
  Consider the map $g:S^k\to J$ coming from the boundary of the
  $(k+1)$-cell.  The complex $J$ is homotopy equivalent to $\vee^{N}
  S^n$, because $J$ and $\vee^{N} S^n$ are both homotopy equivalent to
  an $(n+1)$-ball with $N$ punctures.  Furthermore, if $\iota_i:\partial
  I^{n+1}\to \partial J_i$ is the scaling and translation that sends
  the boundary of the cube to the boundary of the $i$th subcube, we
  can choose the homotopy equivalence so that the inclusions into each
  factor of $\vee^{N} S^n$ are homotopic to the $\iota_i$'s.  

  Thus $\pi_n(J)=\Z^N$, with generating set $\{\iota_i\}_{i=1}^N$.
  Given a map $f:S^n\to J$, we can write $f$ as a linear combination
  of the $\iota_i$'s by considering $f$ as a map $S^n\to I^{n+1}$ and
  calculating the winding numbers of $f$.  For each $i$, let $z_i$ be
  the center of the subcube $J_i$ and let $w_i(f)$ be the winding
  number of $f$ with respect to $z_i$, i.e., the image of the
  generator of $H_n(S^n)$ in $H_n(\R^{n+1}\setminus \{z_i\})=\Z$.
  Then
  $$f=\sum_{i=1}^N w_i(f) \cdot \iota_i.$$

  If $\iota:\partial I^{n+1} \hookrightarrow J$ is the inclusion of
  the boundary of the entire unit cube, then $\iota$ winds once around
  each point in the interior of $I^{n+1}$.  Therefore, $w_i(\iota)=1$
  for all $i$, and $\iota$ is homotopic to $\sum_{i=1}^{N} \iota_i$.
  We can write
  \begin{align*}
    g &=\iota\circ  \beta-\sum_{i=1}^{N}\beta_i\\
    &=\biggl(\sum_{i=1}^{N} \iota_i\biggr)\circ
    \beta-\sum_{i=1}^{N}(\iota_i \circ \beta)
  \end{align*}
  where the above equation is taken in $\pi_k(J)$.  By hypothesis,
  this is null-homotopic, so $h$ can be extended
  continuously to a map $K\to J$.  In fact, by a smoothing argument, this
  extension can be made Lipschitz.

  We can construct an increasing sequence 
  $$X_0=K\subset X_1\subset X_2\subset\dots$$
  by gluing together scaled copies of $K$ as follows.  Let $X_0=K$.  To construct
  $X_{i+1}$ from $X_i$, we glue a copy of $K$, scaled by
  $\epsilon^{i+1}$, to each of the cubical holes of $X_i$.  This
  replaces a hole of side length $\epsilon^{i+1}$ by $N$ holes of side
  length $\epsilon^{i+2}$, so for each $i$, $X_i$ is the complement of
  $N^{i+1}$ cubes of side length $\epsilon^{i+1}$ in $I^{n+1}$.  The
  union $\bigcup_{i=0}^\infty X_i$ is the complement in $I^{n+1}$ of a Cantor set
  of measure zero.

  For each $i$, we will construct a map $r_i:X_i\to I^{n+1}$
  such that,
  \begin{itemize}
  \item $r_{i+1}$ extends $r_i$,
  \item $\Lip r_i\le \Lip h$,
  \item the derivative of $r_i$ has rank $\le n$ a.e., and
  \item the restriction of $r_i$ to the boundary of one of the
  holes of $X_i$ is a copy of $\beta$ scaled by $\epsilon^{i+1}$.
  \end{itemize}
  Let $r_0=h$ on $X_0$.  This satisfies all the above conditions.  For
  any $i$, we construct $X_{i+1}$ from $X_i$ by gluing copies of $K$
  to holes in $X_i$.  On each new copy of $K$, we let $r_{i+1}$ be a
  copy of $h$ scaled by $\epsilon^{i+1}$.  This agrees with $r_i$ on
  the boundary of the copy of $K$, and since we scaled the domain and
  the range by the same factor, we still have $\Lip r_{i+1}= \Lip h$.

  The direct limit of the $r_i$ is a map 
  $$r:\bigcup_{i=0}^\infty X_i\to I^{n+1}$$
  defined on the complement of a Cantor set in $I^{k+1}$ with $\Lip
  r\le \Lip h$.  If we extend $r$ to all of $I^{k+1}$ by continuity,
  we get a Lipschitz extension of $\beta$ whose derivative has rank
  $\le n$ a.e.
\end{proof}

The construction in the Heisenberg group is similar.  Note that
because fillings of $n$-spheres in the $n$-th Heisenberg group have
Hausdorff dimension at least $n+2$ \cite[3.1.A]{GromovCC}, we need
$k\ge n+2$ rather than $k\ge n+1$.  (Gromov showed that sets of
topological dimension $\ge n+1$ must have Hausdorff dimension at least
$n+2$, and a filling of an embedded $n$-sphere must have topological
dimension $>n$.)  We will also need the following theorem
about low-dimensional Lipschitz extensions to Heisenberg groups:
\begin{theorem}[{\cite[3.5.D]{GromovCC}, \cite{Wenger-Young-Lip-ext}}]\label{thm:heisExtend}
  For any $n$, there is a $c>0$ such that if $X$ is a cube complex of
  dimension $\le n$, and if $f_0:X^{(0)}\to \H^n$ is a Lipschitz map
  defined on the vertices of $X$, then there is a Lipschitz extension
  $f:X\to \H^n$ of $f_0$ such that $\Lip f\le c \Lip f_0$.
\end{theorem}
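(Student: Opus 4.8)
The plan is to extend $f_0$ one skeleton at a time. Rescaling, we may assume all cells of $X$ are unit cubes, and we put $L=\Lip f_0$; since two adjacent vertices are then at distance at most $1$ in $X$, we have $d(f_0(v),f_0(w))\le L$ whenever $vw$ is an edge. We will build extensions $f_j\colon X^{(j)}\to\H^n$ of $f_{j-1}$, $j=1,\dots,n$, maintaining the invariant that $f_j$ is $c_j(n)L$-Lipschitz on each closed $j$-cell in that cell's Euclidean metric. Because $\H^n$ is a length space, this invariant for $j=n$ already gives $\Lip f_n\le c_n(n)L$ for the intrinsic metric of $X$: a shortest path between two points of $X$ meets finitely many cells, and its image has length at most $c_n(n)L$ times its own length. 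So it remains to produce the $f_j$.

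The one nontrivial input is that $\H^n$ is \emph{Lipschitz $(n-1)$-connected}, i.e.\ there is $c_0=c_0(n)$ so that for every $0\le k\le n-1$ each $\lambda$-Lipschitz map $S^k\to\H^n$ extends to a $c_0\lambda$-Lipschitz map $D^{k+1}\to\H^n$. Given this, the step $X^{(0)}\to X^{(1)}$ is just that $\H^n$ is geodesic: join the images of the two endpoints of each edge by a constant-speed geodesic. For $1\le j\le n-1$, the boundary of each $(j+1)$-cell $Q$, a union of $j$-cells, carries the map $f_j$, which is $O_n(c_jL)$-Lipschitz on $\partial Q$ with its path metric; after a biLipschitz identification of $\partial Q$ with $S^j$ (dimensional constant), Lipschitz $j$-connectedness extends it to a map of $Q$ that is $O_n(c_0c_jL)$-Lipschitz. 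Doing this on every $(j+1)$-cell produces $f_{j+1}$, and after $n$ steps the accumulated constant is some $c(n)$, proving the theorem modulo the connectedness statement.

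That statement is the cited result of Gromov \cite{GromovCC} and Wenger--Young \cite{Wenger-Young-Lip-ext}, and it is where all the work is. The naive idea of coning a $k$-sphere to its center, e.g.\ via the dilations $s_r$, fails: although $d(0,s_rw)=r\,d(0,w)$, one computes $d(s_rw,s_{r'}w)\lesssim\sqrt{|r-r'|}\,d(0,w)$, so a dilation cone is only $\tfrac12$-H\"older; moreover no Lipschitz radial cone can exist at all, since a Lipschitz filling of an embedded $n$-sphere would have Hausdorff dimension at most $n+1$, contradicting Gromov's bound of $n+2$ — and this obstruction is exactly why one cannot push $k$ past $n-1$. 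The feasible route uses that $\H^n$ contains $n$-dimensional horizontal isotropic subspaces, each isometric to a totally geodesic Euclidean $\R^n$, inside which affine coning \emph{is} Lipschitz: one subdivides the $(k+1)$-ball into a controlled number of pieces, deforms the map so that each piece maps into a suitable such subspace, cones each piece there, and glues along shared faces, tracking winding numbers as in the Euclidean construction above. The main obstacle is precisely this last combination — the subdivision, the straightening into horizontal planes, and the gluing — carried out with every Lipschitz constant controlled uniformly in $n$; we take the outcome as a black box from \cite{GromovCC, Wenger-Young-Lip-ext}.
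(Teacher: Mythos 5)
Your proposal is correct and follows essentially the same route as the paper, which does not prove this theorem but cites it and sketches exactly your argument: extend skeleton by skeleton, reducing everything to Lipschitz extensions over cube boundaries, i.e.\ to the Lipschitz $(n-1)$-connectedness of $\H^n$ established in the cited works of Gromov and Wenger--Young. Your reduction (geodesics for the $1$-skeleton, biLipschitz identification of $\partial Q$ with $S^j$ for the inductive step, and the cell-wise bound giving a global Lipschitz constant in the path metric) is sound, and treating the connectivity statement as a black box matches the paper's treatment.
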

The proof of this theorem involves repeatedly extending a map defined
on the boundary $\partial I^k$ of a unit $k$-cube to the entire
$k$-cube.  One first extends $f_0$ to a Lipschitz map on the
1-skeleton of $X$, then inductively to higher-dimensional skeleta.  It
is important that the original map $f_0$ is defined on every vertex of
$X$; without this condition, $c$ would have to depend on the complex
$X$ as well.

\begin{proof}[{Proof of Thm.\ \ref{thm:heisHilton}}]
  As before, we may replace $S^k$ and $S^n$ with $\partial I^{k+1}$
  and $\partial I^{n+1}$.  We will start by constructing an $n$-complex
  $J$ which is homotopy equivalent to a wedge of spheres, a subset $K$
  of $I^{k+1}$, and a Lipschitz map $h:K\to J$.  The main difference
  between this construction and the previous one is that $J$ will be a
  complex equipped with a map $\bar{\alpha}:J\to \H^n$ rather than a
  subset of $\H^n$.

  %picture?
  For any $\epsilon>0$ such that $1/\epsilon\in \N$, consider the
  complex $((\partial I^{n+1})\times [0,1/\epsilon])\cup
  (I^{n+1}\times \{0\})$.  This has $2n+2$ faces of the form
  $I^{n}\times [0,1/\epsilon]$ and one of the form $I^{n+1}$ and we
  can tile it with a total of
  $$N(\epsilon)=(2n+2)\epsilon^{-(n+2)}+\epsilon^{-(n+1)}$$
  cubes of side length $\epsilon$.  Let $J(\epsilon)$ be the $n$-skeleton of
  this tiling.  We identify $\partial I^{n+1}$ with $\partial
  I^{n+1}\times\{1/\epsilon\}\subset J(\epsilon)$.

  We claim that there is some $c>0$ such that if $f:\partial
  I^{n+1}\to \H^n$ is a Lipschitz map, then for any $\epsilon>0$,
  there is a Lipschitz extension $\bar{f}:J(\epsilon)\to \H^n$ with
  Lipschitz constant $\Lip{\bar{f}}\le c \Lip{f}$.  Recall that there
  is a family of dilations $s_t:\H^n\to \H^n$ such that $s_t(0)=0$ for
  all $t$ and $d(s_t(u),s_t(v))=td(u,v)$.  After composing
  $f$ with a dilation of $\H^n$ and translating it so that its image
  contains the identity, we may assume that $\Lip(f)=1$ and that
  $f(\partial I^{n+1})$ is contained in the ball $B\subset \H^n$
  around the identity of radius $n+1$.  Define $\bar{f}:J(\epsilon)\to
  \H^n$ on the vertices of $J(\epsilon)$ as
  $$\bar{f}(v,t)=\begin{cases}
    s_{\epsilon t}(f(v)) & t>0 \\
    1 & t=0
  \end{cases}$$ where $v\in I^{n+1}, t\in [0,1/\epsilon]$ and where
  $s_t:\H^n\to \H^n$ is dilation by a factor of $t$.  We claim that
  this is Lipschitz on the vertices with Lipschitz constant independent of
  $\epsilon$; then, by Theorem~\ref{thm:heisExtend}, we can extend it to a
  $c$-Lipschitz map on all of $J(\epsilon)$.
  
  It suffices to show that the distance between the
  images of any two adjacent vertices is $O(\epsilon)$, with implicit
  constant depending only on $n$.  If the two vertices are $(v,0)$ and
  $(v',0)$, the map sends both of them to the identity.  If $v$ is
  adjacent to $v'$ in $\partial I^{n+1}$ and $t\in (0,1/\epsilon]$,
  then
  $$d(\bar{f}(v,t),\bar{f}(v',t))=d(s_{\epsilon t}(f(v)), s_{\epsilon
    t}(f(v'))) \le d(v,v')=\epsilon.$$
  
  If the vertices are of the form $(v,t), (v,t')$, with
  $|t-t'|=\epsilon$, let $f(v)=(x,y,z)$ for
  $x,y\in \R^n$ and $z\in \R$.  
  On any compact set, 
  $$d_{\H^n}((a,b,c),(a',b',c'))=O(\sqrt{\|a-a'\|+\|b-b'\|+\|c-c'\|}),$$
  and since $\bar{f}(v,t),\bar{f}(v,t')\in B$, 
  $$d_{\H^{n}}(\bar{f}(v,t),\bar{f}(v,t'))=O(\sqrt{\epsilon|t-t'|
  \|x\|+ \epsilon|t-t'|\|y\|+ \epsilon^2|t^2-t'^2|\|z\|})=O(\epsilon)$$ 
  as desired.
  
  Choose $\epsilon>0$ such that 
  $$N(\epsilon)\le \left\lfloor\frac{1}{2 c \epsilon}\right\rfloor^{k+1}$$
  (this is possible because $k\ge n+2$) and let $J=J(\epsilon)$,
  $N=N(\epsilon)$.  Label the cubes of $J$ by $1,\dots,N$.
  
  Next, we construct $K$.  We can subdivide $I^{k+1}$ into at least
  $N$ subcubes, each with side length at least $2c\epsilon$.  Number
  $N$ of these subcubes $1,\dots, N$, and for each $i$, let $K_i$ be a
  cube of side length $c\epsilon$ centered at the center of the $i$th
  subcube.  Let $K=I^{k+1}\smallsetminus \bigcup_{i=1}^N K_i$.  As in
  the proof of Theorem\ \ref{thm:euclHilton}, construct a Lipschitz
  map $h:K\to J$ such that for each $i$, $\partial K_i$ is mapped to
  $\partial J_i$ by a scaling of $\beta$.

  Define $X_0=K\subset X_1\subset\dots$ as before, so that $X_i$
  consists of $I^{k+1}$ with $N^{i+1}$ cubical holes of side length
  $(c\epsilon)^{i+1}$.  Let $Y_0=J$.  This consists of $N$ cubical
  holes of side length $\epsilon$.  For each $i$, we let $Y_{i+1}$ be
  $Y_i$ with a scaled copy of $J$ glued to each cubical hole, so that
  for each $i$, $Y_i$ is an $n$-complex consisting of the boundaries
  of $N^{i+1}$ cubes of side length $\epsilon^{i+1}$.  We construct
  maps $\gamma_i:X_i\to Y_i$ inductively.  We start by letting
  $\gamma_0=h$.  By induction, if $C$ is the boundary of one of the
  holes in $X_i$, $\gamma_i$ sends $C$ to the boundary $D$ of a hole
  in $Y_i$.  To construct $X_{i+1}$ from $X_i$, we glue a scaled
  copy of $K$ to $C$, and to construct $Y_{i+1}$ from $Y_i$, we
  glue a scaled copy of $J$ to $D$.  We extend $\gamma_i$ to
  $\gamma_{i+1}$ by sending each scaled copy of $K$ to the
  corresponding scaled copy of $J$ by a scaled copy of $h$.  Note that
  since the scaling factors in the construction of $X_i$ and $Y_i$ are
  different, the Lipschitz constant of $\gamma_i$ varies from point to point;
  if $Z$ is a connected component of $X_i\setminus X_{i-1}$, then
  $$\Lip \gamma_i|_Z\le  c^{-i}\Lip h.$$

  Finally, we construct maps $\sigma_i:Y_i\to \H^n$.  We proceed inductively.
  As noted above, any Lipschitz map $f:\partial I^{n+1}\to \H^n$ can
  be extended to a Lipschitz map $\bar{f}:J\to \H^n$ with
  $\Lip \bar{f}\le c\Lip f$.  We will construct a sequence of maps
  $\sigma_i:Y_i\to \H^n$ with $\Lip\sigma_i \le c^{i+1} \Lip \alpha$.
  Let $\sigma_0=\bar\alpha:Y_0\to \H^n$; we have $\Lip\sigma_0 \le c
  \Lip \alpha$.  For each $i$, the complex $Y_{i+1}$ consists of $Y_i$
  with $N^{i+1}$ copies of $J$ glued on, so we can extend $\sigma_i$
  to $Y_{i+1}$ by constructing an extension over every copy of $J$.
  By induction, $\Lip\sigma_i \le c^{i+1} \Lip \alpha$, so
  $\Lip\sigma_{i+1} \le c^{i+2} \Lip \alpha$ as desired.

  Let $r_i=\sigma_i\circ \gamma_i$.  If $Z$ is a connected
  component of $X_i\setminus X_{i-1}$, then
  $$\Lip r_i|_Z \le  c^{-i}(\Lip h) c^{i+1} (\Lip \alpha)\le c (\Lip h)( \Lip \alpha),$$
  so the $r_i$ are uniformly Lipschitz.  Their direct limit is a
  Lipschitz map from the complement of a Cantor set to $\H^n$ which
  extends $\alpha\circ \beta$.  Extending this to all of $I^{k+1}$ by
  continuity, we get the desired $r$.
\end{proof}

\section{Factoring through trees}\label{section:factor-tree}
The aim of this section is to prove Theorem~\ref{thm:factor-tree}. For
this, let $X$ and $Y$ be metric spaces as in the statement of the
theorem and let $f: X\to Y$ be a Lipschitz map. Roughly, the idea of
the proof is to pull back the metric of $Y$ by $f$ and show that the
resulting metric space is a tree. 

Define a pseudo-metric on $X$ by
\begin{equation*}
 d_f(x, x'):= \inf\{\length(f\circ c): \text{ $c$ Lipschitz curve from $x$ to $x'$}\}
\end{equation*}
and note that 
\begin{equation}\label{eq:d_f}
 d_f(x, x')\leq C\Lip(f) d(x,x')
\end{equation}
for all $x,x'\in X$, where $C$ is the quasi-convexity constant of
$X$. Let $Z$ be the quotient space $Z:= X/_\sim$ by the equivalence
relation given by $x\sim x'$ if and only if $d_f(x, x')=0$. Endow $Z$
with the metric $$d_Z([x], [x']):= d_f(x, x'),$$ where $[x]$ denotes
the equivalence class of $x$, and define maps $\psi: X\to Z$ and
$\varphi: Z\to Y$ by $\psi(x):= [x]$ and $\varphi([x]):= f(x)$, so
that $f = \varphi\circ \psi$.  It follows from
\eqref{eq:d_f} that $\psi$ is $C\Lip(f)$-Lipschitz. Since $$d(f(x),
f(x'))\leq \length(f\circ c)$$ for all Lipschitz curves $c$ from $x$
to $x'$ we moreover infer that $\varphi$ is well-defined and
$1$-Lipschitz.

Furthermore, for any Lipschitz curve $\gamma:[0,1]\to X$,
\begin{equation}\label{eq:lengthSpace}
\length(\psi \circ \gamma)\leq   \length (f\circ \gamma).
\end{equation}
In particular, $Z$ is a length space.

For any Lipschitz closed curve $\gamma=(\gamma_1,\gamma_2):S^1\to \R^2$, let
\begin{equation*}
  A(\gamma)=\int_{S^1} \gamma_1(t)\cdot \gamma_2'(t)\;dt
\end{equation*}
be the signed area of $\gamma$.  We will show:
\begin{lemma}\label{lem:triv-curve-integral}
  For every Lipschitz curve $\alpha: S^1 \to Z$ and every Lipschitz
  map $\pi=(\pi_1,\pi_2): Z\to\R^2$, we have
  \begin{equation}\label{eq:triv-curve-integral}
    A(\pi \circ \alpha)=0.
  \end{equation}
\end{lemma}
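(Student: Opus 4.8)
The goal is to show that pulling back any Lipschitz $\pi\colon Z\to\R^2$ along a Lipschitz loop $\alpha\colon S^1\to Z$ gives zero signed area. Since $f=\varphi\circ\psi$ and $\psi$ is Lipschitz, the composite $\pi\circ\psi\colon X\to\R^2$ is a Lipschitz map into the plane. The key tool is the pure $2$-unrectifiability of $Y$ and — more to the point — the definition of $d_Z$ via $d_f$. First I would reduce to loops that lift through $X$: since $Z$ is a length space and $\psi$ is surjective, I can approximate $\alpha$ by loops of the form $\psi\circ\gamma$ for Lipschitz $\gamma\colon S^1\to X$, up to small error in length, hence in area (the area functional $A$ is continuous with respect to uniform convergence of curves with bounded length, by the standard estimate $|A(c)-A(c')|\le (\length c+\length c')\,\|c-c'\|_\infty$). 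So it suffices to prove $A(\pi\circ\psi\circ\gamma)=0$ for all Lipschitz $\gamma\colon S^1\to X$.

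**Main step.** Now fix such a $\gamma$ and set $g:=\pi\circ\psi\circ\gamma=(g_1,g_2)\colon S^1\to\R^2$, a Lipschitz loop. Consider the Lipschitz map $G:=\pi\circ\varphi\colon Z\to\R^2$; note $g=G\circ(\psi\circ\gamma)$ and also $G\circ\psi=\pi\circ\psi$ is a Lipschitz map $X\to\R^2$ that factors through $Y$ via $\pi\circ\varphi\circ\ldots$ — wait, more directly: $G\circ\psi$ and the map $\pi\circ\varphi\circ\psi=\pi\circ f$ agree, so $G\circ\psi\colon X\to\R^2$ factors as $(\pi)\circ f$, i.e. through $Y$. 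The crucial point is that $\pi\circ f\colon X\to \R^2$, being a Lipschitz map factoring through the purely $2$-unrectifiable space $Y$, has image of $\haus^2$-measure zero on every Lipschitz patch: for any Lipschitz $\varrho\colon C\subset\R^2\to X$ with $C$ Borel, $\haus^2((\pi\circ f)(\varrho(C)))=\haus^2(\pi(f(\varrho(C))))=0$ since $f(\varrho(C))$ has $\haus^2$-measure zero and $\pi$ is Lipschitz. In other words, the area form pulls back to zero: the plan is to use this to kill the signed area of $g$ by a fill-and-integrate argument. Extend the Lipschitz loop $\psi\circ\gamma\colon S^1\to Z$ to a Lipschitz map $u\colon D^2\to Z$ (possible since $\pilip_1(X)=0$ and $X$ is quasi-convex: lift $\psi\circ\gamma$ through $\psi$ approximately, fill in $X$ using triviality of $\pilip_1$, push down by $\psi$ — here is where the hypothesis $\pilip_1(X)=0$ enters). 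Then $G\circ u\colon D^2\to\R^2$ is Lipschitz with boundary $g$, and by the change-of-variables / area formula for Lipschitz maps $D^2\to\R^2$ combined with Stokes' theorem, $A(g)=\int_{D^2}(G\circ u)^*(dx\wedge dy)=\int_{D^2}\det D(G\circ u)\,d\mathcal L^2$. But $G\circ u=(\pi\circ f)\circ(\text{lift of }u\text{ to }X)$ up to the $\psi$ step — cleaner: $G\circ u$ factors through $Y$ as well (compose the $X$-lift of $u$ with $\pi\circ f$... or simply: $G\circ u=\pi\circ(\varphi\circ u)$ and $\varphi\circ u\colon D^2\to Y$ is Lipschitz, so $G\circ u\colon D^2\to\R^2$ factors through $Y$), hence its image has $\haus^2$-measure zero, which forces $\det D(G\circ u)=0$ a.e. by the area formula (the Jacobian of a Lipschitz map whose image is $\haus^2$-null must vanish a.e.). Therefore $A(g)=0$.

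**The obstacle.** The step I expect to be delicate is the reduction producing a Lipschitz \emph{disk} filling $u\colon D^2\to Z$ of the loop $\psi\circ\gamma$, i.e. correctly exploiting $\pilip_1(X)=0$. One must be careful that $\pilip_1(X)=0$ gives a Lipschitz null-homotopy of loops \emph{in $X$}, whereas I want to fill a loop in $Z$; the resolution is to lift $\psi\circ\gamma$ back to a nearby loop in $X$ (using surjectivity of $\psi$ and quasi-convexity to connect the sample points, at the cost of a small perturbation), fill that in $X$, and project by the Lipschitz map $\psi$ — but controlling the perturbation so that the area bound survives in the limit requires the area-continuity estimate above and a careful $\varepsilon$-argument. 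A secondary technical point is justifying the ``Jacobian vanishes a.e. when the image is $\haus^2$-null'' implication and the Stokes/area-formula identity $A(\partial\text{-loop})=\int\det D(\cdot)$ for merely Lipschitz maps of the disk — both are standard (the latter via mollification, the former from the Lipschitz area formula $\int_{D^2}|\det D\Phi|=\int_{\R^2}\#\Phi^{-1}(y)\,dy$), but they need to be invoked cleanly rather than taken for granted.
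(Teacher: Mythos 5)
There is a genuine gap, and it sits exactly at the point your argument tries to make the problem easy. You set $G:=\pi\circ\varphi$ and argue that $G\circ u=\pi\circ(\varphi\circ u)$ ``factors through $Y$,'' so that pure $2$-unrectifiability of $Y$ kills the Jacobian. But $\varphi$ maps $Z\to Y$ while $\pi$ is defined on $Z$, so $\pi\circ\varphi$ is not even well defined; what your argument really assumes is that the given Lipschitz map $\pi:Z\to\R^2$ factors as a Lipschitz map through $Y$ via $\varphi$. That is false in general: the metric $d_Z$ is the pulled-back \emph{length} metric $d_f$, which dominates $d_Y$ on the image (this is why $\varphi$ is $1$-Lipschitz) but is not dominated by it, so a Lipschitz function on $(Z,d_Z)$ need not be Lipschitz with respect to $d_Y$. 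Indeed, the functions built in Lemma~\ref{lem:inj-Lip-loop} are of exactly this kind. If every $\pi$ did factor through $Y$, the lemma would be almost immediate and the hypothesis structure of Theorem~\ref{thm:factor-tree} would be largely unnecessary; the whole content of the lemma is to handle $\pi$ that see the finer metric of $Z$.

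Concretely, what your approach skips is the step the paper spends most of its proof on: after filling $\beta$ (your $\gamma$) by a Lipschitz $\varrho:D^2\to X$, one knows from unrectifiability of $Y$ and Kirchheim's area formula only that $\md(f\circ\varrho)_z$ is degenerate a.e.\ --- a statement about distances in $Y$. To conclude $\det\nabla(\pi\circ\psi\circ\varrho)=0$ a.e.\ one needs degeneracy of $\md(\psi\circ\varrho)_z$, i.e.\ smallness of $d_f(\varrho(z),\varrho(z+rv))$, which requires exhibiting a \emph{curve} from $\varrho(z)$ to $\varrho(z+rv)$ whose $f$-image has small length. The paper does this with a Lebesgue-density/Fubini argument: on the set $A_i$ where $\md(f\circ\varrho)(v_i)\le\varepsilon$, density points admit nearby segments in direction $v_i$ meeting $A_i$ in large measure, and the resulting piecewise affine path has $f$-image of length $O(\varepsilon r)$, giving $\md(\psi\circ\varrho)_z(v_i)\le(3\lambda+1)\varepsilon$. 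Your proposal contains no substitute for this step, and without it the conclusion $\det D(\pi\circ u)=0$ a.e.\ is unjustified. The parts of your plan that are fine --- filling the loop in $X$ using $\pi^{\mathrm{Lip}}_1(X)=0$ (note $\gamma$ already lives in $X$, so no lifting is needed), the uniform approximation of a general $\alpha$ by $\psi\circ\beta_n$ with bounded length, and the smoothing/Stokes identification of $A$ with $\int\det$ --- all match the paper's proof; the missing piece is precisely the transfer of degeneracy from the $Y$-metric to the $d_f$-metric.
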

Proposition~3.1 of \cite{Wenger-tree} implies that any geodesic
metric space satisfying Lemma~\ref{lem:triv-curve-integral} is in fact
a metric tree.
\begin{proof}
 We first show that $A(\pi \circ \psi \circ \beta)=0$ for every Lipschitz curve $\beta: S^1\to X$.
 Fix $\beta$ and $\pi$. By the hypotheses on $X$, there is some Lipschitz map
 $\varrho:D^2\to X$ which extends $\beta$.  We first claim that the metric derivative
 $\md(\psi\circ\varrho)_z$ is degenerate
 for almost every $z\in D^2$.
For this, let $0<\varepsilon<1/2$ and let $\{v_1,\dots,v_k\}\subset S^1$ be a finite $\frac{\varepsilon}{\lambda}$-dense subset, where $\lambda=\Lip(f\circ\varrho)$. For $i\in\{1,2,\dots, k\}$ define 
\begin{equation*}
 A_i:= \{z\in D^2: \text{ $\md(f\circ\varrho)_z$ exists, is a seminorm, and $\md(f\circ\varrho)_z(v_i)\leq \varepsilon$}\}.
\end{equation*} 
It is not difficult to show that 
\begin{equation}\label{eq:tutti-quanti}
\left|D^2\backslash\bigcup_{i=1}^k A_i\right| = 0,
\end{equation}
where $|\cdot|$ denotes the Lebesgue measure on $\R^2$.
 Indeed, for almost every $z\in D^2$ the metric derivative $\md(f\circ\varrho)_z$ exists and is a seminorm. Since $Y$ is purely $2$-unrectifiable it follows from the area formula \cite[Thm.\ 7]{Kirchheim-rectifiable} that $\md(f\circ\varrho)_z$ is degenerate for almost every $z\in D^2$. Thus, given such $z$, there exists $v\in S^1$ such that $\md(f\circ\varrho)_z(v)=0$. Choose $i$ such that $|v-v_i|\leq \varepsilon/\lambda$. It follows that $$\md(f\circ\varrho)_z(v_i)\leq \md(f\circ\varrho)_z(v_i - v) \leq \varepsilon.$$ This proves \eqref{eq:tutti-quanti}. Now, fix $i\in\{1, 2, \dots, k\}$ and let $z\in A_i$ be a Lebesgue density point. Let $r_0>0$ be such that $B(z, 2r_0)\subset D^2$ and  
 \begin{equation}\label{eq:Lebesgue-density}
  \frac{|B(z,r)\backslash A_i|}{|B(z,r)|} \leq 100^{-1}\varepsilon^2
 \end{equation}
  for all $r\in(0,2r_0)$. Let $v_i^\perp\in S^1$ be a vector orthogonal to $v_i$ and let $r\in(0,r_0)$. For each $s\in (0, \varepsilon r)$ let $C_s$ denote the set $C_s:= \{t\in[0,r]: z+s v_i^\perp + t v_i\not\in A_i\}$. It follows from Fubini's theorem and \eqref{eq:Lebesgue-density} that there exists a subset $\Omega\subset(0, \varepsilon r)$ of strictly positive measure such that $\haus^1(C_s)\leq \varepsilon r$ for every $s\in\Omega$. Let $s\in\Omega$ and denote by $\gamma$ the piecewise affine curve in $\R^2$ connecting $z$ with $z+rv_i$ via $z+sv_i^\perp$ and $z+sv_i^\perp+rv_i$. It now follows that
  \begin{equation*}
  % \begin{split}
    \length(f\circ\varrho\circ\gamma) \leq 2s\lambda + \int_0^r\md(f\circ\varrho)_{z+sv_i^\perp+tv_i}(v_i)dt
     \leq 2s\lambda + \varepsilon r + \lambda |C_s|
     \leq (3\lambda+1)\varepsilon r
  %  \end{split}
  \end{equation*}
 and hence that for every $r\in(0,r_0)$
  \begin{equation*}
   \frac{1}{r} d_Z(\psi\circ\varrho(z), \psi\circ\varrho(z+rv_i)) = \frac{1}{r}d_f(\varrho(z), \varrho(z+rv_i)) \leq (3\lambda + 1)\varepsilon.
  \end{equation*}
  In particular, if $\md(\psi\circ\varrho)$ exists at $z$ and is a seminorm then $\md(\psi\circ\varrho)_z(v_i)\leq (3\lambda+1)\varepsilon$. Since $\varepsilon>0$ was arbitrary this shows that $\md(\psi\circ\varrho)_z$ is degenerate for almost all $z\in D^2$, as claimed. It now follows that $\det\left(\nabla(\pi\circ\psi\circ\varrho)\right) =0$ almost everywhere on $D^2$ and hence, by a smoothing argument and Stokes' theorem, that
 $$A(\psi\circ\beta) = \int_{D^2} \det\left(\nabla(\pi\circ\psi\circ\varrho)\right) = 0.$$

Now, let $\alpha: S^1 \to Z$ be a Lipschitz curve.  We identify $S^1$ with the interval $[0,2\pi]$ with its endpoints glued together.  We will construct a sequence of Lipschitz curves $\beta_n: S^1\to X$, $n=1, 2, \dots,$ such that $\psi\circ\beta_n$ converges uniformly to $\alpha$ and such that $\length(\psi\circ\beta_n)\leq 2\Lip(\alpha)$ for every $n\in\N$.  Namely, for every $t\in S^1$, let $x_t\in X$ be a representative of the equivalence class $\alpha(t)$.  Then for every $t,u\in S^1$ with $t<u$, we have $d_f(x_t,x_u)\le \Lip(\alpha) |t-u|$, so there is a Lipschitz curve $\gamma_{t,u}:[0,2\pi]\to X$ from $x_t$ to $x_u$ such that 
$\length (f\circ \gamma_{t,u})\le 2 \Lip(\alpha) |t-u|.$  By \eqref{eq:lengthSpace}, we have
$$\length (\psi \circ \gamma_{t,u}) \leq \length (f\circ \gamma_{t,u})\le 2 \Lip(\alpha) |t-u|.$$
Let $\beta_n$ be the concatenation $\gamma_{0,2\pi/n}\dots \gamma_{2\pi(n-1)/n,2\pi}.$  Since $\psi\circ \gamma_{t,u}$ stays within distance $2 \Lip(\alpha) |t-u|$ of $\alpha(t)$, we have $\psi\circ\beta_n\to \alpha$ uniformly.  In addition, since $\length(\psi\circ\beta_n)$ is uniformly bounded, we have 
$$A(\pi\circ\alpha)=\lim_{n\to \infty} A(\pi\circ \psi\circ \beta_n)$$ 
for every Lipschitz map $\pi:Z\to\R^2$.  Since $A(\pi\circ \psi\circ
\beta_n)=0$ for all $n$, this proves \eqref{eq:triv-curve-integral}.
\end{proof}

Furthermore, any curve satisfying \eqref{eq:triv-curve-integral} for
every $\pi$ is non-injective.
\begin{lemma}\label{lem:inj-Lip-loop}
 Let $(Z',d)$ be a metric space and $\gamma: S^1\to Z'$ be an injective Lipschitz curve. Then there exists a Lipschitz map $\pi=(\pi_1,\pi_2): Z'\to\R^2$ such that $$A(\pi\circ\gamma) \not=0.$$
\end{lemma}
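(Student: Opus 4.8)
The plan is to produce the desired map $\pi$ by post-composing a suitable ``coordinate chart'' of $Z'$ near the image of $\gamma$ with the Euclidean area functional, arranging that the pulled-back curve is a genuine simple closed curve in $\R^2$ enclosing positive area. Concretely, since $\gamma\colon S^1\to Z'$ is injective and $S^1$ is compact, $\gamma$ is a homeomorphism onto its image $\Gamma:=\gamma(S^1)$, a Jordan-type curve in $Z'$. First I would pick four points $p_0,p_1,p_2,p_3=\gamma(t_0),\dots,\gamma(t_3)$ in cyclic order on $\Gamma$ and define $\pi_1,\pi_2\colon Z'\to\R$ by combining the four Lipschitz functions $z\mapsto d(z,p_i)$ so that the composite $\pi\circ\gamma\colon S^1\to\R^2$ traces out (a reparametrization of) the boundary of a fixed convex quadrilateral, hence a simple closed curve with $A(\pi\circ\gamma)\neq 0$. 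The point is that $A$ is a homotopy invariant among loops with prescribed behaviour, or more robustly: $A(\pi\circ\gamma)=\tfrac12\int_{S^1}(\pi_1\,d\pi_2-\pi_2\,d\pi_1)$ depends only on the loop $\pi\circ\gamma$, and if this loop is (close to) a simple convex polygon the integral is bounded below by its enclosed area.

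More carefully, here are the steps I would carry out. (1) Use injectivity and compactness of $\gamma$ to get a modulus of injectivity: there is an increasing function $\omega$ with $\omega(\delta)>0$ for $\delta>0$ such that $d(\gamma(t),\gamma(s))\ge \omega(|t-s|_{S^1})$ for all $s,t$. (2) Choose $t_0<t_1<t_2<t_3$ in $S^1$ equally spaced; the four image points $p_0,\dots,p_3$ are then pairwise separated by at least some $\eta>0$. (3) Define Lipschitz functions on $Z'$: for instance $u(z)=d(z,p_0)-d(z,p_2)$ and $w(z)=d(z,p_1)-d(z,p_3)$, each $2$-Lipschitz, and set $\pi=(u,w)$. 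On $\Gamma$ the function $u$ runs monotonically-ish from $u(p_0)\le -\eta$ to $u(p_2)\ge \eta$ along one arc and back along the other, and similarly $w$; the pair $(u,w)\circ\gamma$ is a loop that ``goes around'' the origin. (4) Compute or estimate $A(\pi\circ\gamma)$: the cleanest route is to note that $\pi\circ\gamma\colon S^1\to\R^2$ has winding number $\pm1$ about the origin — because $u$ changes sign exactly on the arcs between $p_0$ and $p_2$, and $w$ on those between $p_1$ and $p_3$, the four sign patterns $(\pm,\pm)$ are visited in cyclic order — and a loop of nonzero winding number about an interior point has nonzero signed area, since $A(\pi\circ\gamma)=\int_{\R^2}\mathrm{wind}_{\pi\circ\gamma}(x)\,dx$ by the coarea/degree formula, and the integrand is a nonzero integer on an open set.

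The main obstacle is step (4): controlling the \emph{sign pattern} of $(u,w)$ along $\Gamma$ rigorously, i.e.\ ruling out that $u$ or $w$ oscillates extra times along an arc of $\Gamma$ and thereby cancels the winding. The functions $d(\cdot,p_i)$ need not be monotone along $\Gamma$ in any metric space, so one cannot simply assert $u$ is increasing along the arc from $p_0$ to $p_2$. I expect the fix is to choose the four auxiliary points more cleverly — e.g.\ take $p_0,p_2$ to be a pair realizing the diameter of $\Gamma$ and $p_1,p_3$ chosen on the two complementary arcs far from both — or, alternatively, to localize: pick a small arc $\Gamma_0\subset\Gamma$ on which we have good control and a Lipschitz retraction-like function, extend by McShane, and show the contribution of $\Gamma\setminus\Gamma_0$ to $A$ is negligible compared to that of $\Gamma_0$. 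A second, slicker option avoiding sign-pattern bookkeeping entirely: embed $\Gamma$ bi-Lipschitzly into some $\R^N$ via $z\mapsto(d(z,q_1),\dots,d(z,q_N))$ for a fine net $\{q_j\}$ of $\Gamma$ (Kuratowski-type embedding, which is $1$-Lipschitz and bi-Lipschitz onto its image by the injectivity modulus), extend this embedding to a Lipschitz map $\iota\colon Z'\to\R^N$ by McShane coordinatewise, then project $\R^N\to\R^2$ linearly to a generic plane so that $\iota\circ\gamma$ becomes a Lipschitz loop with a transverse self-avoiding image and nonzero enclosed area — a generic projection of a simple closed rectifiable curve has positive $2$-dimensional ``shadow'' area for some pair of coordinates, and one then reads off $A\neq 0$ from Stokes. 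I would pursue this second approach, as it reduces the lemma to the elementary Euclidean fact that a bounded simple rectifiable loop in $\R^N$ has a coordinate $2$-plane onto which its projection has nonzero signed area.
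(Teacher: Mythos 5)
Your first route stalls where you yourself flag it, but the difficulty is worse than sign-pattern bookkeeping: even if you could show that $(u,w)\circ\gamma$ winds once around the origin, that alone does not force $A\neq 0$, because $A(\sigma)=\int_{\R^2}\operatorname{wind}_\sigma(x)\,dx$ and regions of opposite winding can cancel (a figure eight with equal lobes has points of winding $+1$ yet zero signed area). More seriously, the ``elementary Euclidean fact'' underpinning your preferred second route is false. For a closed Lipschitz loop $r$ in $\R^N$ the signed area of any linear projection onto a $2$-plane is a linear function of the antisymmetric matrix $\bigl(\oint r_i\,dr_j\bigr)_{i,j}$, and this matrix can vanish identically for a \emph{simple} closed curve: take $\gamma(t)=(\sin 2t,\sin t,\cos 3t)$, $t\in[0,2\pi]$. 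It is injective (the only double point of the planar figure eight $(\sin 2t,\sin t)$ occurs at $t=0,\pi$, and there $\cos 3t$ takes the distinct values $1$ and $-1$), while $\oint x\,dy=\oint x\,dz=\oint y\,dz=0$ by orthogonality of the trigonometric functions. Hence every linear projection of this simple rectifiable loop onto every $2$-plane has zero signed area, so no genericity argument can complete your reduction; moreover generic projections to $\R^2$ need not be self-avoiding in the first place (every planar projection of a knotted curve has double points). As proposed, neither route proves the lemma.

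The missing idea is that one should not try to make $\pi\circ\gamma$ wind or remain simple at all, but should localize the area integrand to a single sub-arc, which is essentially the fix you gesture at in passing. The paper fixes $0<a<b<2\pi$, uses injectivity plus compactness (so $\gamma$ is a homeomorphism onto its image) to find an open set $U\subset Z'$ with $U\cap\gamma(S^1)=\gamma((a-\varepsilon,b+\varepsilon))$, and takes $\pi_1(z)=\max\{0,\,1-\delta^{-1}d(z,\gamma([a,b]))\}$, a bump equal to $1$ on $\gamma([a,b])$ and $0$ off $U$, together with $\pi_2(z)=d(z,\gamma(a))$. Then $\int_{S^1}(\pi_1\circ\gamma)(\pi_2\circ\gamma)'\,dt$ is supported in $(a-\varepsilon,b+\varepsilon)$; on $[a,b]$ it equals $\pi_2(\gamma(b))-\pi_2(\gamma(a))=d(\gamma(b),\gamma(a))$ because $\pi_1\circ\gamma\equiv 1$ there, and the two transition intervals contribute at most $2\varepsilon\Lip(\gamma)$, which is beaten by choosing $\varepsilon<(2\Lip(\gamma))^{-1}d(\gamma(b),\gamma(a))$. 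No monotonicity of distance functions along the curve, no winding numbers, and no embedding are needed; injectivity enters only through the existence of the open set $U$ separating the chosen arc from the rest of the image. The essential point your localization sketch lacks is that the cutoff $\pi_1$ must annihilate the contribution of the rest of the curve outright, rather than merely estimate it.
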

 \begin{proof}
  We identify $S^1$ with $[0,2\pi]$ with the endpoints identified. Let $0<a<b<2\pi$ and let $0< \varepsilon< (2\Lip(\gamma))^{-1} d(\gamma(b), \gamma(a))$ be so small that $0<a-\varepsilon< b+\varepsilon<2\pi$.  Since $\gamma$ is a homeomorphism onto its image there exists $U\subset Z'$ open with $$\gamma((a-\varepsilon, b+\varepsilon)) = U\cap \gamma(S^1).$$ Let $\delta>0$ be so small that the open $\delta$-neighborhood of $\gamma([a,b])$ is contained in $U$.
  Define $\pi_1: Z'\to\R$ by $$\pi_1(z):= \max\left\{0, 1 - \delta^{-1} d(z, \gamma([a,b]))\right\}.$$
  Clearly, $\pi_1$ is $\delta^{-1}$-Lipschitz with $\pi_1=1$ on $\gamma([a,b])$ and $\pi_1=0$ on $U^c$. Define furthermore a $1$-Lipschitz function $\pi_2:Z'\to\R$ by $\pi_2(z):= d(z, \gamma(a))$. Since $(\pi_1\circ\gamma)(t) = 1$ for all $t\in[a,b]$ it follows that
  $$\int_a^b(\pi_1\circ\gamma)(t)\cdot (\pi_2\circ\gamma)'(t) dt = \pi_2(\gamma(b)) - \pi_2(\gamma(a)) = d(\gamma(b), \gamma(a)).$$ From this and the fact that $\gamma(t)\not\in U$ for all $t\in(a-\varepsilon, b+\varepsilon)$ we finally obtain 
 \begin{equation*}
   \begin{split}
    \int_0^{2\pi} (\pi_1\circ\gamma)(t)\cdot (\pi_2\circ\gamma)'(t)dt &= \int_{a-\varepsilon}^{b+\varepsilon} (\pi_1\circ\gamma)(t)\cdot (\pi_2\circ\gamma)'(t)dt\\
%&\geq  d(\gamma(b), \gamma(a)) - \int_{a-\varepsilon}^{a} |(\pi_2\circ\gamma)'(t)|dt - \int_{b}^{b+\varepsilon}|(\pi_2\circ\gamma)'(t)|dt\\
  &   \geq d(\gamma(b), \gamma(a)) - 2\varepsilon \Lip(\gamma)\\
    &>0.
   \end{split}
  \end{equation*} 
  This completes the proof of the lemma.
 \end{proof}

 It remains to show that $Z$ is geodesic. Since $Z$ is a length space,
 there is an injective Lipschitz curve $\alpha:[0,1]\to Z$ connecting
 $z$ to $z'$.  If $\alpha'$ is another such curve, we claim 
 that $\alpha([0,1]) = \alpha'([0,1]).$
Indeed, if this were not true we would find subintervals $(s_1,s_2)$ and $(t_1,t_2)$ of $(0,1)$ such that $$\alpha((s_1, s_2)) \cap \alpha'((t_1,t_2)) = \emptyset$$ and such that the endpoints of $\alpha|_{[s_1,s_2]}$ agree with those of $\alpha'|_{[t_1,t_2]}$. 
We would thus obtain an injective Lipschitz curve $\gamma: S^1\to
Z$. This is impossible by Lemma~\ref{lem:triv-curve-integral} and
Lemma~\ref{lem:inj-Lip-loop}. Consequently, there is a unique simple
path between any two points, and $Z$ is geodesic.  Thus, by \cite[Proposition 3.1]{Wenger-tree}, $Z$ is a metric tree. This concludes the proof of Theorem~\ref{thm:factor-tree}.

\bibliography{lh}
\end{document}